\numberwithin{equation}{section}
\newtheorem{thm}{Theorem}[section]
\newtheorem{lemma}[thm]{Lemma}
\newtheorem{remark}[thm]{Remark}
\newtheorem{definition}[thm]{Definition}
\newtheorem{col}[thm]{Corollary}
\begin{document}

\title[Convergence of ESD of Quaternion Sample Covariance Matrices] {Convergence of Empirical Spectral Distributions of Large Dimensional Quaternion Sample Covariance Matrices }

\author{HUIQIN LI,\ \ ZHIDONG BAI, \ \ JIANG HU }
\thanks{Z. D. Bai was partially supported by CNSF  11171057, the Fundamental Research Funds for the Central Universities, PCSIRT, and the NUS Grant R-155-000-141-112; J. Hu was partially supported by a grant CNSF 11301063.
}

\address{KLASMOE and School of Mathematics \& Statistics, Northeast Normal University, Changchun, P.R.C., 130024.}
\email{lihq118@nenu.edu.cn}
\address{KLASMOE and School of Mathematics \& Statistics, Northeast Normal University, Changchun, P.R.C., 130024.}
\email{baizd@nenu.edu.cn}
\address{KLASMOE and School of Mathematics \& Statistics, Northeast Normal University, Changchun, P.R.C., 130024.}
\email{huj156@nenu.edu.cn}

\subjclass{Primary 15B52, 60F15, 62E20;
Secondary 60F17}

\maketitle
\begin{abstract}
In this paper  we establish the limit of the empirical spectral distribution  of  quaternion sample covariance matrices. Suppose  $\mathbf X_n = ({x_{jk}^{(n)}})_{p\times n}$ is a quaternion random matrix. For each $n$, the
entries $\{x_{ij}^{(n)}\}$ are independent random quaternion variables with  a common mean $\mu$ and variance $\sigma^2>0$. It is shown that the empirical spectral distribution  of the quaternion sample covariance matrix $\mathbf S_n=n^{-1}\mathbf X_n\mathbf X_n^*$ converges to the M-P law as $p\to\infty$, $n\to\infty$ and $p/n\to y\in(0,+\infty)$.
% for the cases  where the ratio of the dimension $p$ to the degrees of freedom $n$ is $y$ ($y \in (0,+\infty)$) and scale index is $\sigma^2$.\\

{\bf Keywords: } Quaternion matrices, Sample covariance matrix, LSD.

\end{abstract}
\section{Introduction. }

In 1843, Hamilton invented the hyper-complex number of rank 4, to which he gave the name quaternion (see \cite{kuipers1999quaternions}). In fact, research on the quaternion matrices can trace back to 1936 \cite{wolf1936similarity}. After a long blank period, people gradually discover that quaternions and quaternion matrices play  important roles in quantum physics, robot technology and artificial satellite attitude control, and so on, see  \cite{adler1995quaternionic,finkelstein1962foundations}. Thus, studies on quaternions attract considerable attention in resent years, see \cite{zhang1994numerical,zhang1995numerical}, among others.

In addition,   wide application of computer science has increased a thousand folds in terms of computing speed and storage capability in the past ten years. Thus, we need a new theory to analyze huge data sets with  high dimensions. Luckily, the theory of random matrices (RMT) might be a possible one for dealing with these problems. In probability theory and mathematical physics, a random matrix is a matrix-valued random variable. And the sample covariance matrix is one of the most important  random matrices in RMT, which can be traced back to Wishart (1928) \cite{wishart1928generalised}. In 1967,  Mar${\rm \breve{c}}$enko and Pastur  proved that the empirical spectral distribution (ESD) of  large  dimensional complex sample covariance matrices tends to the Mar$\breve{c}$enko-Pastur (M-P) law.
%  Considering the limiting spectral analysis of high-dimensional random matrices, Wigner (1955,1958) proved that the expected spectral distribution of a high dimensional Wigner matrix tends to the famous semicircular law. And  Mar${\rm \breve{c}}$enko and Pastur (1967) proved that the expected spectral distribution of a high dimensional sample covariance matrix tends to the Mar$\breve{c}$enko-Pastur law. In  addition Bai, Yin and Krishnaiah (1987) investigated the limiting spectral distribution of the multivariate F matrix.
Since then, a lot of  successive studies about large dimensional complex (or real) sample covariance matrix were investigated.  Here the readers are referred to three books \cite{anderson2010introduction,bai2010spectral,mehta2004random} for more details.

In this paper, we  prove that the ESD of  the quaternion sample covariance matrix still converges to the M-P law.
%
%In the proof, we use Stieltjes transform method. Next, introduce some definitions about it.
First of all, we introduce some notations which will be  used in the paper. Let $A$ be a $p \times p$ Hermitian matrix and denote its eigenvalues  by ${s_j}, j = 1,2, \cdots, p$. The ESD of $A$ is defined by
$${F^A}\left(x\right) =\frac{1}{p}\sum\limits_{j = 1}^p {I\left({s _j} \le x\right)},$$ where ${I\left(D\right)}$ is the indicator function of an event ${D}$.
And the Stieltjes transform of ${F^A}\left(x\right)$ is given by
$$m\left(z\right)=\int_{-\infty}^{+\infty}\frac{1}{x-z}d{F^A}\left(x\right),$$
where $z=u+\upsilon i\in\mathbb{C}^+$.
Let $g\left(x\right)$ and $m_g\left(x\right)$ denote the density function and the Stieltjes transform of  M-P law, that are
\begin{align}\label{des}
g\left(x\right)= \left\{ {\begin{array}{*{20}{c}}
{\frac{1}{{2\pi xy{\sigma ^2}}}\sqrt {\left(b - x\right)\left(x - a\right)} ,}& a \le x \le b;\\
{0,} & otherwise,
\end{array}} \right.
\end{align}
and
\begin{equation}\label{eq:5}
m_g\left(z\right)=\frac{{{\sigma ^2}\left(1 - y\right) - z + \sqrt {{{\left(z - {\sigma ^2} - y{\sigma ^2}\right)}^2} - 4y{\sigma ^2}} }}{{2yz{\sigma ^2}}}
\end{equation}
where $a = {\sigma ^2}{\left(1 - \sqrt y \right)^2}$, $b = {\sigma ^2}{\left(1 + \sqrt y \right)^2}$. If $y > 1$, $G\left(x\right)$, the distribution function of M-P law, has a point mass $1 - 1/y$ at the origin. Here, the constant $y$ is the limit of  dimension $p$ to sample size $n$ ratio and ${\sigma ^2}$  is the scale parameter.
%
%In  this paper, we consider the LSD  of the large dimensional quaternion sample covariance matrix. Sometime it is of practical interest to consider the case where the entries of $X_n$ depend on $n$ and for each $n$ they are independent but not necessarily identically distributed.

Next we shall introduce some notations about quaternion. A quaternion can be represented as a $2 \times 2$ matrix
 $$x = a \cdot \mathbf e + b \cdot \mathbf i + c \cdot \mathbf j + d \cdot \mathbf k =\left( {\begin{array}{*{20}{c}}
a+bi &c+di\\
{ - c+di }&{a-bi }
\end{array}} \right)$$
with the real coefficients  $a,b,c$ and $d$. The quaternion unit can be represented as
\begin{align*}
\mathbf {e} = \left( \begin{array}{cc}
1&0\\
0&1\\
\end{array} \right),
\mathbf i = \left( \begin{array}{cc}
i&0\\
0&- i\\
\end{array} \right),
\mathbf j = \left( \begin{array}{cc}
0&1\\
-1&0\\
\end{array}\right),
\mathbf k = \left( \begin{array}{cc}
0&i\\
i&0\\
\end{array}\right),\end{align*}
where $i$ denotes the imaginary unit. Let the real part and the imaginary part of $x$ be   $\Re x=a \cdot \mathbf e$ and   $\Im x=b \cdot \mathbf i +  c \cdot \mathbf j + d \cdot \mathbf k$ respectively. We denote the conjugate of $x$ be
$$\bar x = a \cdot \mathbf e - b \cdot \mathbf i - c \cdot \mathbf j - d \cdot \mathbf k=\left( {\begin{array}{*{20}{c}}
a-bi &-c-di \\
{ c-di }&{a+bi }
\end{array}} \right), $$
 and the norm of $x$ be
 $$\left\| x \right\| = \sqrt {{a^2} + {b^2} + {c^2} + {d^2}}.$$
More details can be found in  \cite{kuipers1999quaternions,zhang1997quaternions}. Thus, any  $n\times n$ quaternion matrix  $\mathbf X$ can be rewritten as a $2n\times 2n$ complex matrix  $\psi\left(\mathbf X\right)$. Therefore,  we can deal with quaternion matrices as complex matrices for convenience.

Now
our main theorem can be described as following:

\begin{thm}\label{th:1}
Let \ $\mathbf X_n = \left({x_{jk}^{\left(n\right)}}\right)$, $j = 1, \cdots,p; k= 1,\cdots,n$. Suppose for each $n$, $\left\{x_{jk}^{\left(n\right)}\right\}$  are independent quaternion random variables  with a common mean $\mu$ and variance $\sigma^2$.
%Denote
%$$x_{jk}^{(n)} = \left( {\begin{array}{*{20}{c}}
%a_{jk} + b_{jk}i & c_{jk} + d_{jk}i \\
%- c _{jk}+d_{jk}i& a_{jk}- b_{jk}i
%\end{array}} \right)
%=\left( {\begin{array}{*{20}{c}}
%\lambda_{jk} &\omega_{jk} \\
%{ - \overline \omega }_{jk}&{\overline \lambda }_{jk}
%\end{array}} \right), 1\le j \le p, 1\le k \le n,$$
%$${\mathbf x}_j=(x_{1j}^{(n)},\cdots,x_{pj}^{(n)})'~ \mbox{ and }~~
%\mathbf S_n=\frac{1}{n}\sum_{j=1}^n{\mathbf x_j}{\mathbf x_j^*}=\frac{1}{n}{\mathbf X_n}{\mathbf X_n^*}$$ where $a_{jk},b_{jk},c_{jk},d_{jk}\in \mathbb{R}$. Then, $${\rm E}x_{jk}^{(n)}=\mu, {\rm Var} x_{jk}^{(n)}=\sigma^2.$$
Assume that $y_n=p/n \to y \in \left(0,\infty \right)$ and for any constant $\eta > 0$,
\begin{equation}\label{eq:1}
  \frac{1}{{np}}{\sum\limits_{jk} {{\rm E}\left\| {x_{jk}^{(n)}} \right\|} ^2}I\left(\left\| {x_{jk}^{(n)}} \right\| > \eta \sqrt n \right) \to 0.
\end{equation}
Then, with probability one, the ESD of sample covariance matrix $\mathbf S_n=\frac{1}{n}{\mathbf X_n}{\mathbf X_n^*}$ converges to the M-P law which has density function \eqref{des} and a point mass $1-1/y$ at the
 origin when $y>1$. Here superscript $^*$ stands for the  complex conjugate transpose.
\end{thm}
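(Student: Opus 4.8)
The plan is to use the classical embedding $\psi$, which turns an $n\times n$ quaternion matrix into a $2n\times 2n$ complex matrix, so that $\mathbf{S}_n$ becomes a block-structured (non-i.i.d.) complex sample covariance matrix, and then to run the standard Stieltjes transform/resolvent argument while carefully tracking the $2\times2$ block pattern created by $\psi$. First, $\psi(\mathbf{S}_n)=n^{-1}\mathbf{Y}_n\mathbf{Y}_n^{*}$ with $\mathbf{Y}_n=\psi(\mathbf{X}_n)$ of size $2p\times 2n$. A $p\times p$ quaternion Hermitian matrix has $p$ real eigenvalues, and $\psi$ of it is a $2p\times 2p$ complex Hermitian matrix carrying each of them with multiplicity two; hence $F^{\mathbf{S}_n}=F^{\psi(\mathbf{S}_n)}$, and it suffices to identify the LSD of $\psi(\mathbf{S}_n)$. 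When $y>1$, $\psi(\mathbf{S}_n)$ has rank at most $2n$, hence at least $2(p-n)$ zero eigenvalues, which already accounts for the mass $1-1/y$ at the origin; so I may concentrate on the absolutely continuous part and work with Stieltjes transforms.

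Second, I would perform the usual truncation, centralization and rescaling of the entries in the style of Bai--Silverstein, with ``rank one'' replaced throughout by ``rank two''. Using \eqref{eq:1}, replace $x_{jk}^{(n)}$ by $x_{jk}^{(n)}I(\|x_{jk}^{(n)}\|\le\eta\sqrt n)$, subtract its mean, and renormalize the variance to $\sigma^{2}$. Each operation perturbs $\mathbf{Y}_n$ either by a matrix of rank $o(p)$ (the truncation, and the common-mean matrix $\mu\mathbf{1}\mathbf{1}^{*}$ which has rank one) or by one of Frobenius norm $o(\sqrt{np})$ (the residual non-constant means and the rescaling factors), so by the rank and norm inequalities for ESDs of matrices of the form $\mathbf{C}\mathbf{C}^{*}$ the LSD is unchanged. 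After this reduction I may assume $\|x_{jk}^{(n)}\|\le\eta_n\sqrt n$ with $\eta_n\downarrow0$, $\mathrm{E}\,x_{jk}^{(n)}=0$ and $\mathrm{E}\|x_{jk}^{(n)}\|^{2}=\sigma^{2}$; writing $x_{jk}^{(n)}$ in complex coordinates as $\bigl(\begin{smallmatrix} z_{jk}&w_{jk}\\ -\bar w_{jk}&\bar z_{jk}\end{smallmatrix}\bigr)$, this says $\mathrm{E}(|z_{jk}|^{2}+|w_{jk}|^{2})=\sigma^{2}$.

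Third comes the core step. Set $m_n(z)=\tfrac1{2p}\mathrm{tr}(\mathbf{S}_n-zI_{2p})^{-1}$ for $z\in\mathbb{C}^{+}$ and group the columns of $\mathbf{Y}_n$ into the $n$ pairs $\mathbf{R}_m=(\mathbf{r}_{2m-1},\mathbf{r}_{2m})$ arising from the $m$-th quaternion column of $\mathbf{X}_n$; the $\mathbf{R}_m$ are independent across $m$. The decisive algebraic fact is that, thanks to the pattern $\bigl(\begin{smallmatrix} z&w\\ -\bar w&\bar z\end{smallmatrix}\bigr)$, the off-diagonal parts of the two rank-one expectations cancel, giving $\mathrm{E}[\mathbf{r}_{2m-1}\mathbf{r}_{2m-1}^{*}+\mathbf{r}_{2m}\mathbf{r}_{2m}^{*}]=\sigma^{2}I_{2p}$. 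Put $\mathbf{S}_{(m)}=\mathbf{S}_n-n^{-1}\mathbf{R}_m\mathbf{R}_m^{*}$ (a quaternion sample covariance matrix, independent of $\mathbf{R}_m$) and $A=(\mathbf{S}_{(m)}-zI_{2p})^{-1}$. Since $\mathbf{S}_{(m)}=\psi(\mathbf{B})$ for a quaternion Hermitian $\mathbf{B}$, we have $K\mathbf{S}_{(m)}K^{-1}=\overline{\mathbf{S}_{(m)}}=\mathbf{S}_{(m)}^{T}$ with $K=I_p\otimes\bigl(\begin{smallmatrix}0&1\\-1&0\end{smallmatrix}\bigr)$, hence $KAK^{-1}=A^{T}$, which forces each diagonal $2\times2$ block $A_{jj}$ to equal a scalar multiple $a_jI_2$ of the identity. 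Combining this structural fact with a block version of the quadratic-form concentration lemma (legitimate here because $\mathbf{r}_{2m-1},\mathbf{r}_{2m}$ are built from independent $2$-blocks with bounded fourth moments) gives $\tfrac1n\mathbf{R}_m^{*}A\mathbf{R}_m=(\sigma^{2}y_nm_n(z)+o(1))I_2$: the two diagonal entries tend to $\sigma^{2}y_nm_n(z)$ because $\sum_j a_j=p\,m_n(z)+O(1)$ and $\mathrm{E}(|z_{jm}|^{2}+|w_{jm}|^{2})=\sigma^{2}$, while the two off-diagonal entries vanish since their leading terms are $\sum_j a_j\,\mathrm{E}[w_{jm}\bar z_{jm}-\bar z_{jm}w_{jm}]=0$ and the symmetric expression. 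Substituting this into the rank-two resolvent identity $\mathrm{tr}(n^{-1}\mathbf{R}_m\mathbf{R}_m^{*}(\mathbf{S}_n-zI_{2p})^{-1})=2-\mathrm{tr}(I_2+\tfrac1n\mathbf{R}_m^{*}A\mathbf{R}_m)^{-1}$, summing over $m$, and using $1+zm_n(z)=\tfrac1{2p}\mathrm{tr}(\mathbf{S}_n(\mathbf{S}_n-zI_{2p})^{-1})=\tfrac1{2p}\sum_m\mathrm{tr}(n^{-1}\mathbf{R}_m\mathbf{R}_m^{*}(\mathbf{S}_n-zI_{2p})^{-1})$, I obtain in the limit the self-consistent equation
\[ z\sigma^{2}y\,m(z)^{2}+\bigl(z+\sigma^{2}y-\sigma^{2}\bigr)m(z)+1=0, \]
i.e.\ $1+zm(z)=\sigma^{2}m(z)/(1+\sigma^{2}y\,m(z))$, whose unique solution with $m(z)\in\mathbb{C}^{+}$ is exactly $m_g(z)$ of \eqref{eq:5}.

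Finally, to pass from convergence in expectation to almost sure convergence I would use the standard martingale decomposition of $m_n(z)-\mathrm{E}\,m_n(z)$ along the filtration generated by the successive quaternion columns; each increment is a rank-two perturbation of the resolvent, hence $O(1/p)$, so Burkholder's inequality together with the Borel--Cantelli lemma yields $m_n(z)\to m_g(z)$ almost surely for every $z\in\mathbb{C}^{+}$, and therefore $F^{\mathbf{S}_n}$ converges weakly to the M--P law, almost surely. I expect the third step to be the main obstacle: showing that $\tfrac1n\mathbf{R}_m^{*}A\mathbf{R}_m$ is asymptotically scalar requires both the structural lemma (the diagonal blocks of a quaternion resolvent are scalar) and a careful bookkeeping of the entries $z_{jm},w_{jm}$, which within one quaternion are in general neither independent nor centered in a way that makes the cross terms obviously cancel; this is precisely where the quaternion structure is used to recover the \emph{same} M--P law, with the same $y$ and $\sigma^{2}$, as in the complex case. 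The remaining ingredients are routine, if lengthy, adaptations of the classical scalar arguments with ``rank one'' systematically replaced by ``rank two''.
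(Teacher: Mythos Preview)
Your plan is correct and would succeed, but it diverges from the paper's proof in the central step. Both arguments share the same preliminary reductions (truncation, centralization, rescaling) and the same martingale bound for $m_n(z)-\mathrm{E}\,m_n(z)$. The difference lies in how the self-consistent equation for $\mathrm{E}\,m_n(z)$ is derived.

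The paper expands $m_n(z)$ \emph{row-wise}: it deletes the $k$-th quaternion row $\boldsymbol{\phi}_k'$ of $\mathbf X_n$, applies the Schur-complement formula to the $2\times2$ diagonal block of $(\mathbf S_n-zI_{2p})^{-1}$, and sets $\boldsymbol\varepsilon_k$ equal to the $2\times2$ error. The structural input is Corollary~\ref{col:1} (the inverse of a Type-III matrix is Type-I), from which they deduce that $\boldsymbol\varepsilon_k=\theta\,\mathbf I_2$ exactly, so that $\mathrm{tr}\,\boldsymbol\varepsilon_k^2=\tfrac12(\mathrm{tr}\,\boldsymbol\varepsilon_k)^2$; then Lemmas~\ref{lemma:2} and \ref{lemma:6} control ${\rm E}\,\mathrm{tr}\,\boldsymbol\varepsilon_k$ and ${\rm E}|\mathrm{tr}\,\boldsymbol\varepsilon_k|^2$ separately and conclude $\delta_n\to0$. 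You instead expand \emph{column-wise}: you remove the $m$-th quaternion column $\mathbf R_m$, use the rank-two resolvent identity, and aim to show that $P_m=\tfrac1n\mathbf R_m^{*}A\mathbf R_m$ is approximately $\sigma^2 y_n m_n(z)\,\mathbf I_2$. Your structural input, the symplectic relation $KAK^{-1}=A^{T}$ with $K=\mathbf I_p\otimes\bigl(\begin{smallmatrix}0&1\\-1&0\end{smallmatrix}\bigr)$, is equivalent to the paper's Type-I/Type-III lemma; indeed the same identity $J Q J^{-1}=Q^{T}$ applied to $Q=P_m$ (using $K\mathbf R_m=\overline{\mathbf R_m}J$) shows that $P_m$ is \emph{exactly} a scalar multiple of $\mathbf I_2$, not merely asymptotically scalar. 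Recognizing this would dissolve what you flag as the ``main obstacle'': once $P_m$ is exactly scalar, the off-diagonal cancellations you worry about are automatic, and the rest is the standard quadratic-form variance bound plus the trace identity you wrote down.

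In short, both routes are valid and rely on the same algebraic fact about quaternion resolvents, packaged differently. The paper's row expansion reaches the quadratic $y z m^2+(z+y-1)m+1=0$ via $\mathrm{E}\,m_n=(1-z-y_n-y_nz\,\mathrm{E}\,m_n)^{-1}+\delta_n$; your column expansion reaches it via $1+zm_n=y_n^{-1}\bigl(1-(1+\sigma^2 y_n m_n)^{-1}\bigr)+o(1)$. The column approach has the minor advantage that the martingale filtration used for the a.s.\ part and the leave-one-out used for the mean part are the same, while the paper mixes column-wise martingales with row-wise Schur complements; the row approach has the minor advantage that the denominator bound $|1-z-y_n-y_nz\,\mathrm{E}\,m_n|>\upsilon$ is immediate. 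Either way the outcome is the same M--P law.
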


\begin{remark}
Without loss of generality, in the proof of Theorem \ref{th:1}, we assume that $  \sigma^2=1$. One can see that removing the common mean of the entries of $\mathbf X_n$ does not alter the LSD of sample covariance matrices. In fact, let
\begin{align*}
\mathbf T_n=\frac{1}{n}\left(\mathbf {X_n-{\rm E}X_n}\right)\left(\mathbf {X_n-{\rm E}X_n}\right)^*.
\end{align*}
By Lemma \ref{lemma:3}, we have, for all large $p$,
\begin{align*}
\left\| {{F^{\mathbf S_n}} - {F^{\mathbf T_n}}} \right\| \le \frac{1}{2p}rank\left({\rm E}\mathbf X_n\right)\le \frac1p\to 0.
\end{align*}
Furthermore, we assume that $\mu=0$.
\end{remark}

The paper is organized as follows. In Section 2, the structure of the inverse of a kind of  matrices is established which is the key tool of proving Theorem \ref{th:1}. We prove the main theorem by the Stieltjes transform method in Section 3. And in Section 4, we outline some auxiliary lemmas which can be used in Section 3.

%\subsection{Central problem considered in this paper}
\section{Preliminaries. }

We shall use Lemma 2.5 proved by Yin, Bai and Hu in \cite{yin2013semicircular} to prove our main results in next section. For being self-contained, this lemma is now stated as follows.
\begin{definition} A matrix is called Type-\uppercase\expandafter{\romannumeral1} matrix if it has the following structure:
\[\left( {\begin{array}{*{20}{c}}
{{t_1}}&0&{{a_{12}}}&{{b_{12}}}& \cdots &{{a_{1n}}}&{{b_{1n}}}\\
0&{{t_1}}&{{c_{12}}}&{{d_{12}}}& \cdots &{{c_{1n}}}&{{d_{1n}}}\\
{{d_{12}}}&{ - {b_{12}}}&{{t_2}}&0& \cdots &{{a_{2n}}}&{{b_{2n}}}\\
{ - {c_{12}}}&{{a_{12}}}&0&{{t_2}}& \cdots &{{c_{2n}}}&{{d_{2n}}}\\
 \vdots & \vdots & \vdots & \vdots & \ddots & \vdots & \vdots \\
{{d_{1n}}}&{ - {b_{1n}}}&{{d_{2n}}}&{ - {b_{2n}}}& \cdots &{{t_n}}&0\\
{ - {c_{1n}}}&{{a_{1n}}}&{ - {c_{2n}}}&{{a_{2n}}}& \ldots &0&{{t_n}}
\end{array}} \right).\]
Here all the  entries are  complex.
\end{definition}
\begin{definition}A matrix is called Type-\uppercase\expandafter{\romannumeral2} matrix if it has the following structure:
\begin{footnotesize}
\[\left( {\begin{array}{*{20}{c}}
{{t_1}}&0&{{a_{12}} + {c_{12}}  i}&{{b_{12}} + {d_{12}}  i}& \cdots &{{a_{1n}} + {c_{1n}}  i}&{{b_{1n}} + {d_{1n}}  i}\\
0&{{t_1}}&{ - {{\bar b}_{12}} - {{\bar d}_{12}}  i}&{{{\bar a}_{12}} + {{\bar c}_{12}}  i}& \cdots &{ - {{\bar b}_{1n}} - {{\bar d}_{1n}}  i}&{{{\bar a}_{1n}} + {{\bar c}_{1n}}  i}\\
{{{\bar a}_{12}} + {{\bar c}_{12}}  i}&{ - {b_{12}} - {d_{12}}  i}&{{t_2}}&0& \cdots &{{a_{2n}} + {c_{2n}}  i}&{{b_{2n}} + {d_{2n}}  i}\\
{{{\bar b}_{12}} + {{\bar d}_{12}}  i}&{{a_{12}} + {c_{12}}  i}&0&{{t_2}}& \cdots &{ - {{\bar b}_{2n}} - {{\bar d}_{2n}}  i}&{{{\bar a}_{2n}} + {{\bar c}_{2n}}  i}\\
 \vdots & \vdots & \vdots & \vdots & \ddots & \vdots & \vdots \\
{{{\bar a}_{1n}} + {{\bar c}_{1n}}  i}&{ - {b_{1n}} - {d_{1n}}  i}&{{{\bar a}_{2n}} + {{\bar c}_{2n}}  i}&{ - {b_{2n}} - {d_{2n}} i}& \cdots &{{t_n}}&0\\
{{{\bar b}_{1n}} + {{\bar d}_{1n}}  i}&{{a_{1n}} + {c_{1n}}  i}&{{{\bar b}_{2n}} + {{\bar d}_{2n}}  i}&{{a_{2n}} + {c_{2n}}  i}& \ldots &0&{{t_n}}
\end{array}} \right).\]
\end{footnotesize}
Here $i=\sqrt{-1}$ denotes the usual imaginary unit and all the other variables are  complex numbers.
\end{definition}
\begin{definition}A matrix is called Type-\uppercase\expandafter{\romannumeral3} matrix if it has the following structure:
\[\left( {\begin{array}{*{20}{c}}
{{t_1}}&0&{{a_{12}} }&{{b_{12}} }& \cdots &{{a_{1n}}}&{{b_{1n}} }\\
0&{{t_1}}&{ - {{\bar b}_{12}} }&{{{\bar a}_{12}} }& \cdots &{ - {{\bar b}_{1n}} }&{{{\bar a}_{1n}}}\\
{{{\bar a}_{12}} }&{ - {b_{12}} }&{{t_2}}&0& \cdots &{{a_{2n}} }&{{b_{2n}} }\\
{{{\bar b}_{12}} }&{{a_{12}}}&0&{{t_2}}& \cdots &{ - {{\bar b}_{2n}} }&{{{\bar a}_{2n}} }\\
 \vdots & \vdots & \vdots & \vdots & \ddots & \vdots & \vdots \\
{{{\bar a}_{1n}}}&{ - {b_{1n}} }&{{{\bar a}_{2n}}}&{ - {b_{2n}} }& \cdots &{{t_n}}&0\\
{{{\bar b}_{1n}} }&{{a_{1n}} }&{{{\bar b}_{2n}} }&{{a_{2n}} }& \ldots &0&{{t_n}}
\end{array}} \right).\]
Here all the  entries are  complex.
\begin{lemma}
For all $n\geq1$, if a complex  matrix  $\Omega_n$ is  invertible and of Type-$I\!I$, then $\Omega_n^{-1}$ is a Type-\uppercase\expandafter{\romannumeral1} matrix.
\end{lemma}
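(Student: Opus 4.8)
The plan is to produce an involutive anti-automorphism $\Phi$ of the complex matrix algebra $M_{2n}(\mathbb{C})$ whose fixed-point set is exactly the class of Type-\uppercase\expandafter{\romannumeral1} matrices and which fixes every Type-\uppercase\expandafter{\romannumeral2} matrix; the lemma then follows in two lines. Regard a $2n\times 2n$ complex matrix $M$ as an $n\times n$ array $M=(M_{ij})_{i,j=1}^{n}$ of $2\times 2$ blocks, put $J_2=\left(\begin{smallmatrix}0&1\\-1&0\end{smallmatrix}\right)$, and let $\mathbf{J}=I_n\otimes J_2$ be block-diagonal with all diagonal blocks equal to $J_2$. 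I would set $\Phi(M):=\mathbf{J}^{-1}M^{\top}\mathbf{J}$. Since $J_2^{-1}A^{\top}J_2=\mathrm{adj}(A)$ for every $2\times 2$ matrix $A$ (where $\mathrm{adj}$ is the classical adjugate), $\Phi$ is equivalently described as ``block transpose, then replace each $2\times 2$ block by its adjugate.'' Throughout I will use the elementary facts that on $2\times 2$ matrices $\mathrm{adj}$ is $\mathbb{C}$-linear, involutive, order-reversing ($\mathrm{adj}(AB)=\mathrm{adj}(B)\,\mathrm{adj}(A)$), fixes $I_2$ and every scalar multiple of $I_2$, satisfies $\mathrm{adj}(A)=(\mathrm{tr}\,A)I_2-A$, and satisfies $A\,\mathrm{adj}(A)=(\det A)I_2$.

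That $\Phi$ is a $\mathbb{C}$-linear, involutive anti-automorphism of $M_{2n}(\mathbb{C})$ fixing $I_{2n}$ is then routine: conjugation by $\mathbf{J}$ is an algebra automorphism, transposition is an anti-automorphism, and, since $\mathbf{J}^{-1}=\mathbf{J}^{\top}=-\mathbf{J}$, one has $\Phi^2=\mathrm{id}$. (Working blockwise, the anti-multiplicativity is $\Phi(MN)_{ij}=\mathrm{adj}\!\big(\sum_k M_{jk}N_{ki}\big)=\sum_k \mathrm{adj}(N_{ki})\,\mathrm{adj}(M_{jk})=\big(\Phi(N)\Phi(M)\big)_{ij}$; note that block transposition \emph{by itself} is not an anti-homomorphism, since the $2\times 2$ blocks need not commute, and it is precisely the adjugate — i.e.\ the conjugation by $\mathbf{J}$ — that restores the order. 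I regard this as the conceptual point of the statement.) Applying $\Phi$ to $M^{-1}M=I_{2n}=MM^{-1}$ gives $\Phi(M^{-1})=\Phi(M)^{-1}$ for every invertible $M$.

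Next I would identify $\mathrm{Fix}(\Phi)$. If $\Phi(M)=M$, then $M_{ii}=\mathrm{adj}(M_{ii})=(\mathrm{tr}\,M_{ii})I_2-M_{ii}$, so every diagonal block $M_{ii}$ is a scalar multiple of $I_2$, and $M_{ji}=\mathrm{adj}(M_{ij})$ for all $i\neq j$; conversely those two conditions give back $\Phi(M)=M$. Thus $\mathrm{Fix}(\Phi)$ is exactly the class of Type-\uppercase\expandafter{\romannumeral1} matrices. Reading off the block pattern in the definition of a Type-\uppercase\expandafter{\romannumeral2} matrix, its diagonal $2\times 2$ blocks are scalar and its $(j,i)$ block equals the adjugate of its $(i,j)$ block for $i\neq j$; hence every Type-\uppercase\expandafter{\romannumeral2} matrix is in particular of Type-\uppercase\expandafter{\romannumeral1}, and so lies in $\mathrm{Fix}(\Phi)$. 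Putting the pieces together: if $\Omega_n$ is invertible and of Type-\uppercase\expandafter{\romannumeral2}, then $\Phi(\Omega_n)=\Omega_n$, hence $\Phi(\Omega_n^{-1})=\Phi(\Omega_n)^{-1}=\Omega_n^{-1}$, so $\Omega_n^{-1}\in\mathrm{Fix}(\Phi)$, i.e.\ $\Omega_n^{-1}$ is of Type-\uppercase\expandafter{\romannumeral1}.

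I expect the only real work to be careful bookkeeping: applying the $2\times 2$ adjugate identities blockwise without sign errors, and confirming that the conjugation conventions spelled out in the definition of Type-\uppercase\expandafter{\romannumeral2} matrices really do produce the relation $M_{ji}=\mathrm{adj}(M_{ij})$ (once a reading of the notation is fixed, this is a short direct check). If one would rather not guess the map $\Phi$, the same conclusion can be obtained by induction on $n$ using the Schur-complement formula for the inverse of $\left(\begin{smallmatrix}t_1 I_2 & B\\ C & D\end{smallmatrix}\right)$, where $D$ is the trailing $2(n-1)\times 2(n-1)$ principal block (itself of Type-\uppercase\expandafter{\romannumeral2}): the identity $A\,\mathrm{adj}(A)=(\det A)I_2$ collapses $BD^{-1}C$ to a scalar multiple of $I_2$ (making the $(1,1)$ block of the inverse scalar), while order-reversal and linearity of $\mathrm{adj}$ force the corresponding off-diagonal blocks of the inverse to be adjugates of one another; a generic perturbation keeping $\Omega_n$ of Type-\uppercase\expandafter{\romannumeral2} disposes of the case in which $D$ is singular.
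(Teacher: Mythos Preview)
Your argument is correct. The map $\Phi(M)=\mathbf{J}^{-1}M^{\top}\mathbf{J}$ with $\mathbf{J}=I_n\otimes J_2$ is indeed an involutive anti-automorphism of $M_{2n}(\mathbb{C})$, its fixed-point set is precisely the class of Type-\uppercase\expandafter{\romannumeral1} matrices (the check $M_{ji}=\mathrm{adj}(M_{ij})$ matches the displayed block pattern), and every Type-\uppercase\expandafter{\romannumeral2} matrix satisfies that relation and so lies in $\mathrm{Fix}(\Phi)$. The conclusion $\Phi(\Omega_n^{-1})=\Phi(\Omega_n)^{-1}=\Omega_n^{-1}$ then gives the lemma in one line.

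As for comparison: the paper does not actually prove this lemma. It is quoted from \cite{yin2013semicircular} (Yin, Bai, Hu) and only restated here for self-containedness, so there is no in-paper argument to set yours against. The inductive Schur-complement route you sketch at the end is the more likely shape of the original proof in that reference; your anti-automorphism formulation is cleaner and more conceptual, since it identifies Type-\uppercase\expandafter{\romannumeral1} as an honest subalgebra-with-anti-involution (in fact the image of $M_n(\mathbb{H})$ under the standard complex representation, with $\Phi$ playing the role of quaternionic conjugate-transpose read through that embedding). The payoff of your approach is that it simultaneously yields closure of Type-\uppercase\expandafter{\romannumeral1} under products and inverses, which the induction does not give directly; the payoff of the inductive approach is that it requires no guess of the map $\Phi$.
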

\end{definition}
The following corollary is immediate.
\begin{col}\label{col:1}
For all $n\geq1$, if a complex  matrix  $\Omega_n$ is   invertible and of Type-\uppercase\expandafter{\romannumeral3}, then $\Omega_n^{-1}$ is a Type-\uppercase\expandafter{\romannumeral1} matrix.
\end{col}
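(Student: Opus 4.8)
The plan is to recognize that this corollary is merely the specialization of the preceding lemma to a subclass of matrices, so that essentially no new work is required; the only thing to verify is that every Type-III matrix is in fact a Type-II matrix. To see this, compare the two patterns block by block, viewing each as an $n\times n$ array of $2\times 2$ complex blocks. In both cases the $(j,j)$ diagonal block equals $\mathrm{diag}(t_j,t_j)$. For $j<k$, the $(j,k)$ and $(k,j)$ blocks of the Type-II pattern are
\[
\begin{pmatrix} a_{jk}+c_{jk} i & b_{jk}+d_{jk} i\\ -\bar b_{jk}-\bar d_{jk} i & \bar a_{jk}+\bar c_{jk} i\end{pmatrix}
\quad\text{and}\quad
\begin{pmatrix} \bar a_{jk}+\bar c_{jk} i & -b_{jk}-d_{jk} i\\ \bar b_{jk}+\bar d_{jk} i & a_{jk}+c_{jk} i\end{pmatrix},
\]
and putting $c_{jk}=d_{jk}=0$ turns them into the corresponding $(j,k)$ and $(k,j)$ blocks $\begin{pmatrix} a_{jk} & b_{jk}\\ -\bar b_{jk} & \bar a_{jk}\end{pmatrix}$ and $\begin{pmatrix} \bar a_{jk} & -b_{jk}\\ \bar b_{jk} & a_{jk}\end{pmatrix}$ of the Type-III pattern.

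Since the parameters $a_{jk},b_{jk},c_{jk},d_{jk}$ in the definition of a Type-II matrix may be arbitrary complex numbers, the choice $c_{jk}=d_{jk}=0$ is admissible; conversely, any Type-III matrix is obtained from the Type-II pattern by precisely this choice (keeping the same $t_j$, $a_{jk}$, $b_{jk}$). Hence the family of Type-III matrices is contained in the family of Type-II matrices. Therefore, if $\Omega_n$ is invertible and of Type-III, it is in particular an invertible Type-II matrix, and the preceding lemma applies to give that $\Omega_n^{-1}$ is of Type-I.

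I do not anticipate any real obstacle: the whole argument is a routine block-by-block comparison of the two displayed patterns. The only point deserving explicit attention is that the comparison must be checked for the below-diagonal blocks as well as the above-diagonal ones --- which it is, by the same substitution $c_{jk}=d_{jk}=0$ --- after which the lemma does all the remaining work and nothing further is needed.
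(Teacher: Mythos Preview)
Your proposal is correct and matches the paper's approach: the paper simply states that the corollary is ``immediate'' from the preceding lemma, and your argument spells out precisely why, namely that a Type-III matrix is the special case $c_{jk}=d_{jk}=0$ of the Type-II pattern. Nothing more is needed.
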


\section{Proof of Theorem \ref{th:1}}

In this section, we complete the proof by the following two steps. The first one is to truncate, centralize and rescale the random variables $\{x_{ij}^{\left(n\right)}\}$, then we may assume the additional conditions which will be given in Remark \ref{le:3}. In the second part of this section we give the proof of  the Theorem \ref{th:1} by Stieltjes transform.

\subsection{Truncation, Centralization and Rescaling}

\subsubsection{Truncation}

Note that condition (\ref{eq:1}) is equivalent to: for any $\eta > 0$,
\begin{equation}\label{eq:2}
    \mathop {\lim }\limits_{n \to \infty } \frac{1}{{{\eta ^2}np}}{\sum\limits_{jk} {{\rm E}\left\| {x_{jk}^{\left(n\right)}} \right\|} ^2}I\left(\left\| {x_{jk}^{\left(n\right)}} \right\| > \eta \sqrt n \right) = 0.
   \end{equation}
Thus, one can select a sequence ${\eta _n} \downarrow 0$ such that (\ref{eq:2}) remains true when $\eta$ is replaced by $\eta_n$.

\begin{lemma}\label{le:1}
Suppose that the assumptions of Theorem \ref{th:1} hold. Truncate the variables  $x_{jk}^{\left(n\right)}$ at ${\eta _n}\sqrt n$, and denote the resulting variables by $\widehat x_{jk}^{\left(n\right)}$. Write $$\widehat {x}_{jk}^{\left(n\right)}=x_{jk}^{\left(n\right)}I\left(\left\|x_{jk}^{\left(n\right)}\right\|\leq {\eta _n}\sqrt n\right), \ \widehat {\mathbf X}_n=\left(\widehat {x}_{jk}^{\left(n\right)}\right)~~\mbox{and}~~ \ \widehat{\mathbf S}_n=\frac{1}{n}{\widehat {\mathbf X}_n}{\widehat {\mathbf X}_n^*}.$$
Then, with probability 1,
  \begin{equation}
  \left\| {{F^{{\mathbf S_n}}} - {F^{\widehat {\mathbf S}_n}}} \right\|=\sup_x\left|{{F^{{\mathbf S_n}}(x)} - {F^{\widehat{\mathbf S}_n}}(x)}\right| \to 0.\nonumber
\end{equation}
\end{lemma}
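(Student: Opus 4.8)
The plan is to follow the standard truncation argument for sample covariance matrices (see \cite{bai2010spectral}), adapted to the quaternion setting, and to control the rank perturbation via the rank inequality for the difference of ESDs. First I would invoke the rank inequality (Lemma \ref{lemma:3} in the paper's numbering, i.e.\ the statement used in the Remark): since $\mathbf S_n$ and $\widehat{\mathbf S}_n$ differ only through the difference $\mathbf X_n-\widehat{\mathbf X}_n$, we have
\begin{align*}
\left\| F^{\mathbf S_n}-F^{\widehat{\mathbf S}_n}\right\| \le \frac{1}{2p}\,\mathrm{rank}\bigl(\mathbf X_n-\widehat{\mathbf X}_n\bigr)
= \frac{1}{2p}\,\mathrm{rank}\Bigl(\bigl(x_{jk}^{(n)}I(\|x_{jk}^{(n)}\|>\eta_n\sqrt n)\bigr)\Bigr).
\end{align*}
Here one must remember that each quaternion entry is a $2\times2$ complex block, so the relevant dimension is $2p\times 2n$ and the normalizing factor is $2p$; the rank of the perturbation is at most twice the number of columns (or rows) that contain at least one truncated entry, which is still $O(\#\{(j,k):\|x_{jk}^{(n)}\|>\eta_n\sqrt n\})$ up to a constant. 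So it suffices to show that, with probability one,
\begin{align*}
\frac{1}{np}\,\#\bigl\{(j,k): \|x_{jk}^{(n)}\|>\eta_n\sqrt n\bigr\}\to 0.
\end{align*}

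Next I would estimate the expectation of this count. Writing $\varepsilon_{jk}=I(\|x_{jk}^{(n)}\|>\eta_n\sqrt n)$, we have
\begin{align*}
\frac{1}{np}\sum_{jk}\mathrm E\,\varepsilon_{jk}
= \frac{1}{np}\sum_{jk}\mathrm P\bigl(\|x_{jk}^{(n)}\|>\eta_n\sqrt n\bigr)
\le \frac{1}{\eta_n^2 n^2 p}\sum_{jk}\mathrm E\Bigl[\|x_{jk}^{(n)}\|^2 I\bigl(\|x_{jk}^{(n)}\|>\eta_n\sqrt n\bigr)\Bigr],
\end{align*}
and by the choice of $\eta_n\downarrow 0$ for which \eqref{eq:2} still holds with $\eta$ replaced by $\eta_n$, the quantity $\frac{1}{\eta_n^2 np}\sum_{jk}\mathrm E[\|x_{jk}^{(n)}\|^2 I(\|x_{jk}^{(n)}\|>\eta_n\sqrt n)]\to0$; dividing by the extra factor $n$ only helps, so the mean of the normalized count tends to $0$.

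To upgrade this to almost-sure convergence I would use a concentration/Borel--Cantelli argument on the centered sum. The variables $\varepsilon_{jk}$ are independent and bounded by $1$, so $\sum_{jk}(\varepsilon_{jk}-\mathrm E\varepsilon_{jk})$ has variance at most $np$, and one can apply Bernstein's inequality (or a fourth-moment bound together with Markov) to get, for every fixed $\delta>0$, a bound of the form $\mathrm P\bigl(\frac{1}{np}\sum_{jk}(\varepsilon_{jk}-\mathrm E\varepsilon_{jk})>\delta\bigr)\le \exp(-c\,\delta^2 np)$. Summing over $n$ (recall $p=p_n\to\infty$ with $p_n/n\to y>0$, so $np_n\to\infty$ at least linearly) gives a convergent series, and Borel--Cantelli yields $\frac{1}{np}\sum_{jk}\varepsilon_{jk}\to 0$ almost surely. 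Combining this with the rank inequality from the first paragraph finishes the proof.

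The main obstacle, and the only place that genuinely requires care, is the passage from the quaternion matrix to its $2\times2$-block complex representation $\psi(\mathbf X_n)$: one has to check that the rank inequality is applied with the correct dimension $2p$ and that $\mathrm{rank}(\psi(\mathbf X_n-\widehat{\mathbf X}_n))$ is still controlled by (a constant times) the number of truncated scalar entries, so that the combinatorial count above is the right one. Everything else — the moment bound giving convergence in mean, and the Bernstein-type bound giving the almost-sure upgrade — is routine once $\eta_n$ has been chosen so that \eqref{eq:2} persists.
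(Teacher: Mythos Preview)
Your approach is exactly the paper's (rank inequality, then Bernstein plus Borel--Cantelli), but there is a normalization slip that, as written, breaks the argument. From the rank inequality you correctly obtain $\|F^{\mathbf S_n}-F^{\widehat{\mathbf S}_n}\|\le \tfrac{C}{2p}\,\#\{(j,k):\|x_{jk}^{(n)}\|>\eta_n\sqrt n\}$, so the quantity that must vanish a.s.\ is $\tfrac{1}{p}\sum_{jk}\varepsilon_{jk}$, \emph{not} $\tfrac{1}{np}\sum_{jk}\varepsilon_{jk}$; the latter is weaker by a factor of $n$ and is not sufficient. The mean step survives this correction: $\tfrac{1}{p}\sum_{jk}\mathrm E\,\varepsilon_{jk}\le \tfrac{1}{\eta_n^2 np}\sum_{jk}\mathrm E[\|x_{jk}^{(n)}\|^2 I(\cdot)]\to 0$ by \eqref{eq:2}, which is precisely the paper's estimate.

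The concentration step, however, does not survive with the crude variance bound you quote. With $B_n^2\le np$, Bernstein at deviation level $\delta p$ gives an exponent $\tfrac{(\delta p)^2}{2(np+\delta p)}\asymp \tfrac{\delta^2 p}{2n}\to \tfrac{\delta^2 y}{2}$, a constant, so the tail probabilities are not summable. You must instead use the sharper bound $B_n^2=\sum_{jk}\mathrm{Var}\,\varepsilon_{jk}\le \sum_{jk}\mathrm E\,\varepsilon_{jk}=o(p)$, which follows immediately from your own mean computation; then Bernstein yields $\exp(-c\,\delta p)$ and Borel--Cantelli applies. This is exactly how the paper proceeds: it records $\mathrm{Var}\bigl(\tfrac{1}{2p}\sum_{jk}\varepsilon_{jk}\bigr)=o(1/p)$ before invoking Bernstein. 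With these two corrections your proof coincides with the paper's.
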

\begin{proof} By using Lemma \ref{lemma:3}, one has
\begin{align}\label{eq:3}
  \left\| {{F^{{\mathbf S_n}}} - {F^{\widehat{\mathbf S}_n}}} \right\| &\le \frac{1}{{2p}}{\rm rank}\left( {\frac{1}{\sqrt n}{\mathbf X_n} - {\frac{1}{\sqrt n}{\widehat{\mathbf X}}_n}} \right)\notag\\
  & \leq \frac{1}{{2p}}\sum\limits_{jk} {I\left(\left\| {x_{jk}^{(n)}} \right\| >{\eta _n}\sqrt n \right)}.
\end{align}
Taking condition (\ref{eq:2}) into consideration, we get
\begin{align*}
 &{\rm E}\left(\frac{1}{2p}\sum\limits_{jk} {I\left(\left\| {x_{jk}^{(n)}} \right\| > {\eta _n}\sqrt n \right)} \right) \\ \le &\frac{1}{{2\eta _n^2{np}}}{\sum\limits_{jk} {{\rm E}\left\| {x_{jk}^{(n)}} \right\|} ^2}I
 \left(\left\| {x_{jk}^{(n)}} \right\| > {\eta _n}\sqrt n \right) =o\left(1\right)
 \end{align*}
 and
\begin{align*}
& {\rm Var}\left(\frac{1}{2p}\sum\limits_{jk} {I\left(\left\| {x_{jk}^{(n)}} \right\| > {\eta _n}\sqrt n \right)} \right) \\
\le& \frac{1}{{4\eta _n^2{p^2}n}}{\sum\limits_{jk} {{\rm E}\left\| {x_{jk}^{\left(n\right)}} \right\|} ^2}I\left(\left\| {x_{jk}^{\left(n\right)}} \right\| > {\eta _n}\sqrt n \right) =o\left(\frac{1}{p}\right).
\end{align*}
 Then by Bernstein's inequality (see Lemma \ref{lemma:10}), for all small $\varepsilon > 0 $ and large $n$, we obtain
 \begin{align*}
  {\rm  P}\left(\frac{1}{2p}\sum\limits_{jk } {I\left(\left\| {x_{jk}^{(n)}} \right\| > {\eta _n}\sqrt n \right)}  \ge \varepsilon \right) \le 2{e^{ - \varepsilon p/2}}
 \end{align*}
 which implies that
  \begin{equation}\label{eq:4}
 \sum_{} {\rm  P}\left(\frac{1}{2p}\sum\limits_{jk} {I\left(\left\| {x_{jk}^{(n)}} \right\| > {\eta _n}\sqrt n \right)} \ge \varepsilon \right) <\infty.
 \end{equation}
  Together with (\ref{eq:3}), (\ref{eq:4}) and Borel-Cantelli lemma, we obtain
  \begin{equation}
  \left\| {{F^{{\mathbf S_n}}} - {F^{\widehat{\mathbf S}_n}}} \right\| \to 0, \ \mbox{a.s..}\nonumber
\end{equation}
This completes the proof of the lemma.
\end{proof}

\subsubsection{Centralization}

\begin{lemma}\label{lemma:11}
Suppose that the assumptions of Lemma \ref{le:1}  hold.
Denote $$\widetilde {x}_{jk}^{\left(n\right)}=\widehat {x}_{jk}^{\left(n\right)}-{\rm E}\widehat {x}_{jk}^{\left(n\right)}, \ \widetilde {\mathbf X}_n=\left(\widetilde {x}_{jk}^{\left(n\right)}\right)~~\mbox{and}~~ \ \widetilde{\mathbf S}_n=\frac{1}{n}{\widetilde {\mathbf X}_n}{\widetilde {\mathbf X}_n^*}.$$
Then, we obtain
\begin{displaymath}
L\left(F^{\widehat{\mathbf S}_n}, F^{\widetilde{\mathbf S}_n}\right)=o\left(1\right),
\end{displaymath}
 where $L\left(\cdot,\cdot \right)$  denotes the L\'{e}vy distance.
\end{lemma}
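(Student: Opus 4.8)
The plan is to estimate the L\'{e}vy distance through the standard perturbation inequality for sample covariance matrices: for any $p\times n$ complex matrices $\mathbf A$ and $\mathbf B$,
\[
L^4\!\left(F^{\frac1n\mathbf A\mathbf A^*},F^{\frac1n\mathbf B\mathbf B^*}\right)\le\frac{2}{p^2}\,{\rm tr}\!\left(\tfrac1n\mathbf A\mathbf A^*+\tfrac1n\mathbf B\mathbf B^*\right)\,{\rm tr}\!\left(\tfrac1n(\mathbf A-\mathbf B)(\mathbf A-\mathbf B)^*\right),
\]
which I would record as one of the auxiliary lemmas of Section~4. Note that the rank inequality (Lemma~\ref{lemma:3}) invoked in Lemma~\ref{le:1} cannot be used here, since the mean matrix $\mathbf M_n:={\rm E}\widehat{\mathbf X}_n=\big({\rm E}\widehat x_{jk}^{(n)}\big)$ need not be of small rank (its entries are not identical) --- this is precisely why the conclusion is phrased with the L\'{e}vy rather than the Kolmogorov distance. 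Writing $\widehat{\mathbf X}_n=\widetilde{\mathbf X}_n+\mathbf M_n$ and applying the inequality with $\mathbf A=\widehat{\mathbf X}_n$, $\mathbf B=\widetilde{\mathbf X}_n$, we obtain
\[
L^4\!\left(F^{\widehat{\mathbf S}_n},F^{\widetilde{\mathbf S}_n}\right)\le\frac{2}{p}\left(\tfrac1p{\rm tr}\,\widehat{\mathbf S}_n+\tfrac1p{\rm tr}\,\widetilde{\mathbf S}_n\right)\cdot\tfrac1n{\rm tr}\big(\mathbf M_n\mathbf M_n^*\big),
\]
so it remains to check that (i) $\tfrac1p{\rm tr}\,\widehat{\mathbf S}_n$ and $\tfrac1p{\rm tr}\,\widetilde{\mathbf S}_n$ stay bounded (almost surely, for all large $n$), and (ii) the deterministic number $\tfrac1n{\rm tr}(\mathbf M_n\mathbf M_n^*)=\tfrac1n\sum_{jk}\|{\rm E}\widehat x_{jk}^{(n)}\|^2$ tends to $0$.

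For (ii) I would use that ${\rm E}x_{jk}^{(n)}=0$, so ${\rm E}\widehat x_{jk}^{(n)}=-{\rm E}\big[x_{jk}^{(n)}I(\|x_{jk}^{(n)}\|>\eta_n\sqrt n)\big]$; the triangle and Cauchy--Schwarz inequalities, together with ${\rm E}\|x_{jk}^{(n)}\|^2=1$, give
\[
\big\|{\rm E}\widehat x_{jk}^{(n)}\big\|^2\le\frac{1}{\eta_n^2 n}\,{\rm E}\big[\|x_{jk}^{(n)}\|^2 I(\|x_{jk}^{(n)}\|>\eta_n\sqrt n)\big],
\]
whence $\tfrac1n{\rm tr}(\mathbf M_n\mathbf M_n^*)\le\tfrac pn\cdot\tfrac{1}{\eta_n^2 np}\sum_{jk}{\rm E}\big[\|x_{jk}^{(n)}\|^2 I(\|x_{jk}^{(n)}\|>\eta_n\sqrt n)\big]\to0$, by \eqref{eq:2} (with $\eta$ replaced by $\eta_n$) and $p/n\to y<\infty$. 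For (i) I would note ${\rm E}\,\tfrac1p{\rm tr}\,\widehat{\mathbf S}_n=\tfrac1{np}\sum_{jk}{\rm E}\|\widehat x_{jk}^{(n)}\|^2\le1$, while the truncation bound $\|\widehat x_{jk}^{(n)}\|\le\eta_n\sqrt n$ yields ${\rm Var}\big(\tfrac1p{\rm tr}\,\widehat{\mathbf S}_n\big)=O(\eta_n^2/p)$ and, more generally, ${\rm E}\big|\tfrac1p{\rm tr}\,\widehat{\mathbf S}_n-{\rm E}\tfrac1p{\rm tr}\,\widehat{\mathbf S}_n\big|^4=O(\eta_n^4/p^2)$, which is summable in $n$ because $p\asymp n$; Borel--Cantelli then gives almost-sure boundedness of $\tfrac1p{\rm tr}\,\widehat{\mathbf S}_n$, and hence of $\tfrac1p{\rm tr}\,\widetilde{\mathbf S}_n$ via $\tfrac1p{\rm tr}\,\widetilde{\mathbf S}_n\le\tfrac2p{\rm tr}\,\widehat{\mathbf S}_n+\tfrac2{np}{\rm tr}(\mathbf M_n\mathbf M_n^*)$. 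Feeding (i) and (ii) into the second display yields $L^4(F^{\widehat{\mathbf S}_n},F^{\widetilde{\mathbf S}_n})\le\tfrac Cp\cdot o(1)\to0$ almost surely, which proves the lemma.

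The computations are routine; the only part that requires a bit of care is obtaining (i) almost surely rather than merely in probability, for which the fourth-moment estimate and Borel--Cantelli just sketched suffice (and in any case, since the prospective limit is nonrandom, convergence of the L\'{e}vy distance in probability would already be enough to carry the limiting spectral distribution over from $\widetilde{\mathbf S}_n$ to $\widehat{\mathbf S}_n$). The non-identical distributions of the entries and the factor $2$ arising from representing each quaternion as a $2\times2$ complex block are harmless, as every quantity above is a plain sum over the entries $x_{jk}^{(n)}$.
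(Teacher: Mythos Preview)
Your proposal is correct and follows essentially the same route as the paper. Both arguments apply the same $L^4$ perturbation inequality (the paper's Lemma~\ref{lemma:4}), control the trace factor $\tfrac1{np}\sum_{jk}\|\widehat x_{jk}^{(n)}\|^2$ almost surely via a fourth-moment bound plus Borel--Cantelli, and show $\tfrac1{np}\sum_{jk}\|{\rm E}\widehat x_{jk}^{(n)}\|^2\to0$ from the Lindeberg-type condition; your use of a Cauchy--Schwarz/Markov step to get the extra $\tfrac1{\eta_n^2 n}$ factor is a harmless variant of the paper's simpler Jensen bound $\|{\rm E}\widehat x_{jk}^{(n)}\|^2\le{\rm E}\|x_{jk}^{(n)}\|^2 I(\|x_{jk}^{(n)}\|\ge\eta_n\sqrt n)$.
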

\begin{proof} Using Lemma \ref{lemma:4} and condition (\ref{eq:2}), we have
\begin{align}\label{al:16}
&L^4\left(F^{\widehat{\mathbf S}_n},F^{\widetilde{\mathbf S}_n}\right)\notag\\
\le& \frac{1}{2p^2}\left({\rm tr}\left(\widehat{\mathbf S}_n+\widetilde{\mathbf S}_n\right)\right)\left({\rm tr}\left(\frac{1}{\sqrt n}\widehat{\mathbf X}_n-\frac{1}{\sqrt n}\widetilde{\mathbf X}_n\right)\left(\frac{1}{\sqrt n}\widehat{\mathbf X}_n-\frac{1}{\sqrt n}\widetilde{\mathbf X}_n\right)^*\right)\notag\\
=&\frac{1}{2n^2p^2}\left(\sum_{jk}^{}\left(\left\|\widehat x_{jk}^{\left(n\right)}\right\|^2+\left\|\widehat x_{jk}^{\left(n\right)}-{\rm E}\widehat x_{jk}^{\left(n\right)}\right\|^2\right)\right)\left(\sum_{jk}^{}\left\|{\rm E}\widehat x_{jk}^{\left(n\right)}\right\|^2\right)\notag\\
=&\left(\frac{1}{np}\sum_{jk}^{}\left(\left\|\widehat x_{jk}^{\left(n\right)}\right\|^2+\left\|\widehat x_{jk}^{\left(n\right)}-{\rm E}\widehat x_{jk}^{\left(n\right)}\right\|^2\right)\right)\left(\frac{1}{2np}\sum_{jk}^{}\left\|{\rm E}\widehat x_{jk}^{\left(n\right)}\right\|^2\right).
\end{align}
Applying Lemma \ref{lemma:5}, one has
\begin{align*}
&{\rm E}\left|\frac{1}{np}\sum_{jk}^{}\left(\left\|\widehat x_{jk}^{\left(n\right)}\right\|^2-{\rm E}\left\|\widehat x_{jk}^{\left(n\right)}\right\|^2\right)\right|^4\\
\le&\frac{C}{n^4p^4}\sum_{jk}{\rm E}\left\|\widehat x_{jk}^{\left(n\right)}\right\|^8+\left(\sum_{j,k}{\rm E}\left\|\widehat x_{jk}^{\left(n\right)}\right\|^4\right)^2\\
\le&Cn^{-2}\left(\eta_n^6n^{-1}y_n^{-3}+\eta_n^4y_n^{-2}\right).
\end{align*}
By Borel-Cantelli lemma, we have
\begin{align*}
\frac{1}{np}\sum_{jk}^{}\left(\left\|\widehat x_{jk}^{\left(n\right)}\right\|^2-{\rm E}\left\|\widehat x_{jk}^{\left(n\right)}\right\|^2\right)\to 0 \ {\rm a.s..}
\end{align*}
Similarly, we can obtain
\begin{align}\label{al:17}
\frac{1}{np}\sum_{jk}^{}\left(\left\|\widehat x_{jk}^{\left(n\right)}-{\rm E}\widehat x_{jk}^{\left(n\right)}\right\|^2-{\rm E}\left\|\widehat x_{jk}^{\left(n\right)}-{\rm E}\widehat x_{jk}^{\left(n\right)}\right\|^2\right)\to 0 \ {\rm a.s..}
\end{align}
Thus, by (\ref{al:16}), for all large $n$,
\begin{align*}
&L^4\left(F^{\widehat{\mathbf S}_n},F^{\widetilde{\mathbf S}_n}\right)\\
\le&\left(\frac{1}{np}\sum_{jk}^{}\left({\rm E}\left\|\widehat x_{jk}^{\left(n\right)}\right\|^2+{\rm E}\left\|\widehat x_{jk}^{\left(n\right)}-{\rm E}\widehat x_{jk}^{\left(n\right)}\right\|^2\right)+o_{a.s.}(1)\right)\left(\frac{1}{2np}\sum_{jk}^{}\left\|{\rm E}\widehat x_{jk}^{\left(n\right)}\right\|^2\right)\\
\le&\frac{C}{np}\sum_{jk}^{}\left\|{\rm E}\widehat x_{jk}^{\left(n\right)}\right\|^2\\
\le&\frac{C}{np}\sum_{jk}^{}{\rm E}\left\|x_{jk}^{\left(n\right)}\right\|^2I\left(\left\|x_{jk}^{\left(n\right)}\right\|\ge\eta_n\sqrt n\right)\to 0.
\end{align*}
The proof of the lemma is complete.
\end{proof}

\subsubsection{Rescaling}
Define
$$\widetilde \sigma_{jk}^2={\rm E}\left\|\widetilde x_{jk}^{\left(n\right)}\right\|^2, \xi_{jk}= \left\{ {\begin{array}{*{20}{c}}
\zeta_{jk} ,& \widetilde \sigma_{jk}^2 < 1/2\\
\widetilde {x}_{jk}^{\left(n\right)} , & \widetilde\sigma_{jk}^2 \ge 1/2
\end{array}} \right., \Lambda= \frac{1}{\sqrt n}\left(\xi_{jk}\right), \sigma_{jk}^2={\rm E}\left\| \xi_{jk}\right\|^2,$$
where $\zeta_{jk}$ is a bounded quaternion random variable with ${\rm E}\zeta_{jk}=0$, ${\rm Var}\zeta_{jk}=1$.
\begin{lemma}\label{le:2}
Write
\begin{gather*}
\breve {x}_{jk}^{\left(n\right)}={ \sigma_{jk}^{-1} {\xi}_{jk}}, \ \breve {\mathbf X}_n=\left(\breve {x}_{jk}^{\left(n\right)}\right), \ {\rm and} \ \breve{\mathbf S}_n=\frac{1}{n}{\breve {\mathbf X}_n}{\breve {\mathbf X}_n^*}.
\end{gather*}
Under the conditions assumed in Lemma \ref{lemma:11}, we have
\begin{displaymath}
L\left(F^{\breve{\mathbf S}_n}, F^{\widetilde{\mathbf S}_n}\right)=o\left(1\right).
\end{displaymath}
\end{lemma}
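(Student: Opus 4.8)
The plan is to follow the pattern of Lemma~\ref{lemma:11}. Applying the L\'evy-distance estimate of Lemma~\ref{lemma:4} to the matrices $\frac1{\sqrt n}\widetilde{\mathbf X}_n$ and $\frac1{\sqrt n}\breve{\mathbf X}_n$ gives
\[
L^4\!\left(F^{\widetilde{\mathbf S}_n},F^{\breve{\mathbf S}_n}\right)\le\left(\frac1{np}\sum_{jk}\left(\left\|\widetilde x_{jk}^{(n)}\right\|^2+\sigma_{jk}^{-2}\left\|\xi_{jk}\right\|^2\right)\right)\left(\frac1{2np}\sum_{jk}\left\|\widetilde x_{jk}^{(n)}-\sigma_{jk}^{-1}\xi_{jk}\right\|^2\right),
\]
so it suffices to prove that, almost surely, the first factor converges to $2$ (each of its two averages converging to $1$) and the second factor converges to $0$. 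All these averages split naturally according to whether $\widetilde\sigma_{jk}^2\ge1/2$ --- in which case $\xi_{jk}=\widetilde x_{jk}^{(n)}$ and $\sigma_{jk}=\widetilde\sigma_{jk}$ --- or $\widetilde\sigma_{jk}^2<1/2$ --- in which case $\xi_{jk}=\zeta_{jk}$ is bounded and $\sigma_{jk}=1$.

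The deterministic fact driving the whole argument is that $\frac1{np}\sum_{jk}(1-\widetilde\sigma_{jk}^2)\to0$. Since $\mu=0$ and $\sigma^2=1$, one has
\[
1-\widetilde\sigma_{jk}^2={\rm E}\bigl\|x_{jk}^{(n)}\bigr\|^2 I\bigl(\bigl\|x_{jk}^{(n)}\bigr\|>\eta_n\sqrt n\bigr)+\bigl\|{\rm E}\widehat x_{jk}^{(n)}\bigr\|^2\ge0,
\]
and both summands are negligible on average by \eqref{eq:2} and the choice $\eta_n\downarrow0$: the first because $\eta_n^{-2}(np)^{-1}\sum_{jk}{\rm E}\|x_{jk}^{(n)}\|^2 I(\|x_{jk}^{(n)}\|>\eta_n\sqrt n)\to0$ while $\eta_n^2\to0$, and the second after the bound $\|{\rm E}\widehat x_{jk}^{(n)}\|\le{\rm E}\|x_{jk}^{(n)}\|I(\|x_{jk}^{(n)}\|>\eta_n\sqrt n)\le(\eta_n\sqrt n)^{-1}{\rm E}\|x_{jk}^{(n)}\|^2 I(\|x_{jk}^{(n)}\|>\eta_n\sqrt n)$. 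Two consequences follow: $\#\{(j,k):\widetilde\sigma_{jk}^2<1/2\}\le2\sum_{jk}(1-\widetilde\sigma_{jk}^2)=o(np)$, and, since $\sigma_{jk}^2\in[1/2,1]$ always, the elementary bound $(1-\sigma_{jk}^{-1})^2\le2(1-\sigma_{jk}^2)^2\le2(1-\widetilde\sigma_{jk}^2)$ holds on $\{\widetilde\sigma_{jk}^2\ge1/2\}$.

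With these in hand the second factor is first estimated in mean: on $\{\widetilde\sigma_{jk}^2\ge1/2\}$ its summand is $(1-\sigma_{jk}^{-1})^2\|\widetilde x_{jk}^{(n)}\|^2\le2(1-\widetilde\sigma_{jk}^2)\|\widetilde x_{jk}^{(n)}\|^2$, of expectation at most $2(1-\widetilde\sigma_{jk}^2)$; on $\{\widetilde\sigma_{jk}^2<1/2\}$ it is $\|\widetilde x_{jk}^{(n)}-\zeta_{jk}\|^2\le2\|\widetilde x_{jk}^{(n)}\|^2+2\|\zeta_{jk}\|^2$, of bounded expectation; summing, the mean of the second factor is $\le C(np)^{-1}\sum_{jk}(1-\widetilde\sigma_{jk}^2)+C(np)^{-1}\#\{\widetilde\sigma_{jk}^2<1/2\}=o(1)$, whereas the mean of the first factor equals $(np)^{-1}\sum_{jk}(\widetilde\sigma_{jk}^2+1)\to2$. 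It then remains to upgrade these statements to almost-sure convergence, which is done exactly as in Lemma~\ref{lemma:11}: since $\|\widetilde x_{jk}^{(n)}\|\le2\eta_n\sqrt n$ and $\|\xi_{jk}\|\le C\eta_n\sqrt n$, Lemma~\ref{lemma:5} bounds the fourth moment of each centred average by $O(p^{-2})$, which is summable in $n$, so the Borel--Cantelli lemma applies. Hence $L^4(F^{\widetilde{\mathbf S}_n},F^{\breve{\mathbf S}_n})\le(2+o(1))\,o(1)\to0$ almost surely, which proves the lemma.

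The only genuinely delicate step is the limit $\frac1{np}\sum_{jk}(1-\widetilde\sigma_{jk}^2)\to0$, in particular the bookkeeping that makes both the truncation remainder and the centering bias $\|{\rm E}\widehat x_{jk}^{(n)}\|^2$ negligible on average given only \eqref{eq:2} and $\eta_n\downarrow0$; once that is available, the decomposition into the ranges $\widetilde\sigma_{jk}^2\ge1/2$ and $\widetilde\sigma_{jk}^2<1/2$ and the fourth-moment/Borel--Cantelli routine run just as in Lemmas~\ref{le:1} and \ref{lemma:11}.
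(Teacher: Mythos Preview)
Your proof is correct and follows the same overall template as the paper (apply Lemma~\ref{lemma:4}, control the ``trace'' factor almost surely, and show the ``difference'' factor tends to~$0$ almost surely via a moment bound plus Borel--Cantelli), but it is organized differently in two respects.

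First, the paper inserts the intermediate matrix $\Lambda\Lambda^*$ with $\Lambda=n^{-1/2}(\xi_{jk})$ and proves separately that $L(F^{\widetilde{\mathbf S}_n},F^{\Lambda\Lambda^*})\to0$ (replacing $\widetilde x_{jk}^{(n)}$ by $\xi_{jk}$ on the ``bad'' set $\{\widetilde\sigma_{jk}^2<1/2\}$) and $L(F^{\breve{\mathbf S}_n},F^{\Lambda\Lambda^*})\to0$ (rescaling by $\sigma_{jk}^{-1}$), concluding by the triangle inequality; you do both modifications in a single step. Second, for the part supported on the bad set the paper bounds the $m$-th moment of the \emph{uncentered} average $\tfrac{1}{np}\sum_h u_h$ with $m=[\log p]$, which is needed there because the only information on the mean is $K/(np)=o(1)$ with no rate; you instead separate the mean (which is $o(1)$ deterministically once $K=o(np)$ is known) from the centered fluctuations, for which a fourth-moment bound via Lemma~\ref{lemma:5} suffices, exactly as in Lemma~\ref{lemma:11}. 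Your route is therefore somewhat more economical, while the paper's two-step decomposition has the virtue of isolating the ``replacement'' and ``rescaling'' effects. One small implicit assumption you use --- that $\|\zeta_{jk}\|\le C\eta_n\sqrt n$ for large $n$ --- is harmless since $\zeta_{jk}$ is bounded and one may always choose $\eta_n\downarrow0$ slowly enough that $\eta_n\sqrt n\to\infty$; the paper makes the same tacit use of this in its bound $u_h\le 2\eta_n^2 n$.
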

\begin{proof} a): Our first goal is to show  that  $$L\left(F^{\widetilde{\mathbf S}_n} , F^{\Lambda\Lambda^*}\right) \to 0, \ \mbox{a.s..}$$
	 Let ${\mathcal E}_n$ be the set of pairs $\left(j,k\right)$ : $\widetilde\sigma _{jk}^2 < \frac{1}{2}$ and $N_n=\sum \limits_{\left(j,k\right)\in {\mathcal E}_n} I\left(\widetilde\sigma _{jk}^2 <1/2\right) $. Because $\frac{1}{np}\sum \limits_{jk} \widetilde \sigma _{jk}^2 \to 1 $, we conclude that $N_n=o\left(np\right)$. Owing to Lemma \ref{lemma:4} and (\ref{al:17}), we get:
  \begin{align}\label{al:2}
&{L^4}\left(F^{\widetilde {\mathbf S}_n} , F^{\Lambda\Lambda^*}\right)\notag\\
\le &\frac{1}{2p^2}\left({\rm tr}\left(\widetilde{\mathbf S}_n+\Lambda\Lambda\right)\right)\left({\rm tr}\left(\frac{1}{\sqrt n}\widetilde{\mathbf X}_n-\Lambda\right)\left(\frac{1}{\sqrt n}\widetilde{\mathbf X}_n-\Lambda\right)^*\right)\notag\\
=&\frac{1}{2n^2p^2}\left(\sum_{jk}^{}\left(\left\|\widetilde x_{jk}^{(n)}\right\|^2+\left\|\xi_{jk}\right\|^2\right)\right)\left(\sum_{jk}^{}\left\|\xi_{jk}-\widetilde x_{jk}^{(n)}\right\|^2\right)\notag\\
=&\frac{1}{2n^2p^2}\left(\sum_{jk}^{}{\rm E}\left(\left\|\widetilde x_{jk}^{(n)}\right\|^2+\left\|\xi_{jk}\right\|^2\right)+o_{\rm a.s.}(1)\right)\left(\sum_{jk}^{}\left\|\xi_{jk}-\widetilde x_{jk}^{(n)}\right\|^2\right)\notag\\
\le&\frac{C}{np}\sum_{jk}^{}\left\|\xi_{jk}-\widetilde x_{jk}^{(n)}\right\|^2
:=\frac{C}{np}\sum\limits_{h=1}^K u_h
\end{align}
where $K=N_n$ and $u_h=\left\|\xi_{jk}-\widetilde x_{jk}^{(n)}\right\|^2$. Then, using the fact that for all $l\geq 1$, $l!\geq\left(l/3\right)^l$, we have
\begin{align*}
{\rm E}\left(\frac{1}{np}\sum\limits_{h=1}^K u_h\right)^m =&\frac{1}{ n^mp^{m}}\sum\limits_{m_1+\ldots+m_K=m}\frac{m!}{{m_1}!\ldots{m_K}!}{\rm E}u_1^{m_1}\ldots {\rm E}u_K^{m_K} \\
&\leq \frac{1}{ n^{m}p^{m}}\sum\limits_{l=1}^m \sum_{\substack{m_1+\ldots +m_l=m \\ m_t\geq 1}}\frac{m!}{l!m_1! \ldots m_l!}\prod_{t=1}^l \left(\sum_{h=1}^K {\rm E}u_h^{m_t}\right)
 \\
&\leq C \sum_{l=1}^m  n^{-m} p^{-m}l^m\left(l!\right)^{-1}\left(2\eta_n^2 n\right)^{m-l}2^l K^l \\
&\leq C\sum_{l=1}^m\left(\frac{6K}{np}\right)^l\left(\frac{2\eta_n^2l}{p}\right)^{m-l}
\leq C\left(\frac{6K}{np}+\frac{2\eta_n^2 m}{p}\right)^m.
\end{align*}
By selecting m=[log p] that implies \ $\frac{2\eta_n^2m}{p}\to 0$, and noticing $\frac{6K}{np} \to 0$, we have for any fixed $t>0$,
\begin{align*}
{\rm E}\left(\frac{1}{np}\sum\limits_{h=1}^K u_h\right)^m \le o\left(p^{-t}\right).
\end{align*}
From the inequality above with $t=2$ and (\ref{al:2}), it follows that
$$L\left(F^{\widetilde{\mathbf S}_n} , F^{\Lambda\Lambda^*}\right) \to 0, \ \mbox{a.s..}$$
b): Our next goal is to show that $$L\left(F^{\breve{\mathbf S}_n}, F^{\Lambda\Lambda^*}\right) \to 0, \ \mbox{a.s..}$$
Applying Lemma \ref{lemma:4}, we conclude that:
  \begin{align}
&{L^4}\left(F^{\breve {\mathbf S}_n} , F^{\Lambda\Lambda^*}\right)\notag\\
\le &\frac{1}{2p^2}\left({\rm tr}\left(\breve{\mathbf S}_n+\Lambda\Lambda\right)\right)\left({\rm tr}\left(\frac{1}{\sqrt n}\breve{\mathbf X}_n-\Lambda\right)\left(\frac{1}{\sqrt n}\breve{\mathbf X}_n-\Lambda\right)^*\right)\notag\\
=&\frac{1}{2n^2p^2}\left(\sum_{jk}^{}\left(\left\|\breve x_{jk}^{(n)}\right\|^2+\left\|\xi_{jk}\right\|^2\right)\right)\left(\sum_{jk}^{}\left\|\xi_{jk}-\breve x_{jk}^{(n)}\right\|^2\right)\notag\\
=&\frac{1}{2n^2p^2}\left(\sum_{jk}^{}\left(1+\sigma_{jk}^{-2}\right){\rm E}\left\|\xi_{jk}\right\|^2+o_{\rm a.s.}(1)\right)\left(\sum_{jk}^{}\left(1-\sigma_{jk}^{-1}\right)^2\left\|\xi_{jk}\right\|^2\right)\notag\\
\le&\frac{C}{np}\sum_{jk}^{}\left(1-\sigma_{jk}^{-1}\right)^2\left\|\xi_{jk}\right\|^2\notag.
\end{align}
Using the fact
 \begin{align*}
&{\rm E}\left(\frac{C}{np}\sum_{jk}^{}\left(1-\sigma_{jk}^{-1}\right)^2\left\|\xi_{jk}\right\|^2\right)
= \frac{C}{np}\sum_{jk} \left(1- \sigma_{jk}\right)^2
\leq \frac{C}{np}\sum_{jk} \left(1- \sigma_{jk}^2\right) \\
\leq & \frac{C\eta_n^2}{{{\eta_n^2np}}}\sum_{(j,k) \not \in {\mathcal E}_n } \left[{\rm E}\left\|x_{jk}^{\left(n\right)}\right\|^2 I\left(\left\|x_{jk}^{\left(n\right)}\right\|\geq \eta_n \sqrt n\right)+{\rm E}^2\left\|x_{jk}^{\left(n\right)}\right\|I\left(\left\|x_{jk}^{\left(n\right)}\right\|\geq \eta_n \sqrt n\right)\right]\\
 \to& 0
\end{align*}
and by Lemma \ref{lemma:5}, we get
 \begin{align*}
&{\rm E } \left |\frac{C}{np}\sum_{jk}^{}\left(1-\sigma_{jk}^{-1}\right)^2\left(\left\|\xi_{jk}\right\|^2-{\rm E}\left\|\xi_{jk}\right\|^2\right)\right |^4 \\
\leq & \frac{C}{n^4p^4}\left [\sum_{j, k}{\rm E}\left\|x_{jk}^{\left(n\right)}\right\|^8 I\left(\left\|x_{jk}^{\left(n\right)}\right\|\leq \eta_n \sqrt n\right)+\left( \sum_{j, k} {\rm E}\left\|x_{jk}^{\left(n\right)}\right\|^4 I\left(\left\|x_{jk}^{\left(n\right)}\right\|\leq \eta_n \sqrt n\right)\right )^2 \right] \\
\leq & Cn^{-2}\left[n^{-1}\eta_n^6y_n^{-3}+\eta_n^4y_n^{-2}\right]
\end{align*}
which is summable. Together with Borel-Cantelli lemma, it follows that
$$L\left(F^{\breve{\mathbf S}_n} , F^{\Lambda\Lambda^*}\right) \to 0, \ \mbox{a.s..}$$
c): Finally, from a) and b), we can easily get the lemma.
\end{proof}

Combining the results of Lemma \ref{le:1}, Lemma \ref{lemma:11}, and Lemma \ref{le:2}, we have the following remarks.
\begin{remark}\label{le:3}
Under the conditions assumed in Theorem \ref{th:1}, we can further assume that \\
1) $\left\|x_{jk}\right\|\le\eta_n\sqrt n$,\\
2)  ${\rm E}\left(x_{jk}\right)=0 $ and $ {\rm Var}\left(x_{jk}\right)=1$.
\end{remark}
\begin{remark}
For brevity, we shall drop the superscript (n) from the variables. Also the truncated and renormalized variables are still denoted by $x_{jk}$.
\end{remark}

\subsection {Completion of the proof}
Denote
$$m_n\left(z\right)=\frac{1}{2p}{\rm tr}\left(\mathbf S_n-z\mathbf I_{2p}\right)^{-1},$$
where $z=u+\upsilon i\in\mathbb{C}^+$.
\subsubsection {Random part}\label{se:1}
Firstly, we should show that
\begin{equation}\label{eq:6}{m_n}\left(z\right) - {\rm E}{m_n}\left(z\right) \to 0, \ \mbox{a.s..}\end{equation}
Let $\pi_j$ denote the $j$-th column of \ $\mathbf X_n$, $\mathbf S_n^k = {\mathbf S_n} - \frac{1}{n}{\pi _k}\pi _k^*$ and  ${{\rm E}_k}\left( \cdot \right)$ denote the conditional expectation given $\left\{ {{\pi _{k + 1}},{\pi _{k + 2}}, \cdots ,{\pi_{2n}}} \right\}$. Then
\begin{align*}
{m_n}\left(z\right) - {\rm E}{m_n}\left(z\right) = &\frac{1}{{2p}}\sum\limits_{k = 1}^{2n}\left[ {{{\rm  E}_{k - 1}}{\rm tr}{{\left({\mathbf S_n} - z{\mathbf I_{2p}}\right)}^{ - 1}} - } {{\rm E}_k}{\rm tr}{\left({\mathbf S_n} - z{\mathbf I_{2p}}\right)^{ - 1}}\right]\\
 = &\frac{1}{{2p}}\sum\limits_{k = 1}^{2n} {{\gamma _k}} ,
\end{align*}
where
\begin{align*}
{\gamma _k} = &{{{\rm  E}_{k - 1}}{\rm tr}{{\left({\mathbf S_n} - z{\mathbf I_{2p}}\right)}^{ - 1}} - } {{\rm E}_k}{\rm tr}{\left({\mathbf S_n} - z{\mathbf I_{2p}}\right)^{ - 1}}\\
=&\left({{\rm E}_{k - 1}} - {{\rm E}_k}\right)\left[{\rm tr}{\left({\mathbf S_{n}} - z{\mathbf I_{2p}}\right)^{ - 1}} - {\rm tr}{\left(\mathbf S_n^k - z{\mathbf I_{2p}}\right)^{ - 1}}\right].
\end{align*}

\begin{description}
\item [1] When $k = 2t - 1 \left(t =1,2, \cdots ,n\right)$, because of the property  of quaternion matrices, we can obtain
\begin{align*}
{\gamma _k} =&{{{\rm  E}_{k - 1}}{\rm tr}{{\left({\mathbf S_n} - z{\mathbf I_{2p}}\right)}^{ - 1}} - } {{\rm E}_k}{\rm tr}{\left({\mathbf S_n} - z{\mathbf I_{2p}}\right)^{ - 1}}
= 0.
\end{align*}
\item [2] When $k = 2t \left(t =0,1, \cdots ,n\right)$, together with  the formula \[{\left(A + \alpha {\beta ^ * }\right)^{ - 1}} = {A^{ - 1}} - \frac{{{A^{ - 1}}\alpha {\beta ^ * }{A^{ - 1}}}}{{1 + {\beta ^ * }{A^{ - 1}}\alpha }},\]
    we obtain
    \begin{align*}
    {\gamma _k} =&\left({{\rm E}_{k - 1}} - {{\rm E}_k}\right)\left[{\rm tr}{\left({\mathbf S_{n}} - z{\mathbf I_{2p}}\right)^{ - 1}} - {\rm tr}{\left(\mathbf S_n^k - z{\mathbf I_{2p}}\right)^{ - 1}}\right]\\
    = &\left({{\rm E}_{k - 1}} - {{\rm E}_k}\right)\frac{\frac{1}{n}{\pi_{k}^ * {{\left(\mathbf S_n^k - z{\mathbf I_{2p}}\right)}^{ - 2}}{\pi_{k}}}}{{1 +\frac{1}{n} \pi_{k}^ * {{\left(\mathbf S_n^k - z{\mathbf I_{2p}}\right)}^{ - 1}}{\pi_{k}}}}.
    \end{align*}
Since
\begin{align*}
&\left| \frac{\frac{1}{n}{\pi_{k}^ * {{\left(\mathbf S_n^k - z{\mathbf I_{2p}}\right)}^{ - 2}}{\pi_{k}}}}{{1 +\frac{1}{n} \pi_{k}^ * {{\left(\mathbf S_n^k - z{\mathbf I_{2p}}\right)}^{ - 1}}{\pi_{k}}}} \right|\\
\le& \frac{\frac{1}{n}{\pi_{k}^ * {{\left({{\left(\mathbf S_n^k - u{\mathbf I_{2p}}\right)}^2} + {\upsilon^2}{\mathbf I_{2p}}\right)}^{ - 1}}{\pi_{k}}}}{{\Im \left(1 +\frac{1}{n} \pi_{k}^ * {{\left(\mathbf S_n^k - z{\mathbf I_{2p}}\right)}^{ - 1}}{\pi_{k}}\right)}}\\
=& \frac{1}{\upsilon},
\end{align*}
we can easily get
$$\left|\gamma_k\right| \le \frac{2}{\upsilon} .$$
\end{description}
Using Lemma \ref{lemma:8}, it follows that
 $${\rm E}{\left| {{m_{n}}\left(z\right) - {\rm E}{m_{n}}\left(z\right)} \right|^4} \le \frac{{{K_4}}}{{{{\left(2p\right)}^4}}}{\rm E}{\left( {\sum\limits_{k = 1}^{2n} {{{\left| {{\gamma _k}} \right|}^2}} } \right)^2} \le \frac{{{4 K_4}{n^2}}}{{{p^4}{v^4}}} = O\left({n^{ - 2}}\right).$$
Combining with Borel-Cantelli lemma and Chebyshev inequality, we complete the proof  that  $${m_n}\left(z\right) - {\rm E}{m_n}\left(z\right) \to 0,\ \mbox{a.s..}$$

\subsubsection{ Mean convergence}\label{se:2}
When $\sigma^2=1$, (\ref{eq:5}) turns into
\begin{align}\label{al:3}
m\left(z\right)=\frac{{1 - y- z + \sqrt {{{\left(1-z-y\right)}^2} - 4yz} }}{{2yz}}.
\end{align}
Next we will devote to prove that
$${\rm E}{m_n}\left(z\right) \to m\left(z\right).$$
Applying Lemma \ref{lemma:9}, one has\[{m_n}\left(z\right) = \frac{1}{{2p}}\sum\limits_{k = 1}^p {{\rm tr}\left(\frac{1}{n}\boldsymbol{\phi}_k^{\prime}\bar{\boldsymbol{\phi}}_k - z{\mathbf I_2} - \frac{1}{{{n^2}}}\boldsymbol{\phi}_k^{\prime}{\mathbf X}_{nk}^*{{\left(\frac{1}{n}{\mathbf X_{nk}}{\mathbf X_{nk}^*} - z{\mathbf I_{2p - 2}}\right)}^{-1}}{\mathbf X_{n k}}\bar{\boldsymbol{\phi}}_k\right )}^{ - 1}\]
where \ ${\mathbf X_{nk}}$ is the matrix resulting from deleting the $k$-th quaternion row of \ $\mathbf X_n$, and $\boldsymbol{\phi}_k^{\prime}$  is the vector obtained from the $k$-th quaternion row of \ $\mathbf X_n$. Notice that $\boldsymbol{\phi}_k^{\prime}$ can be represent as a $2\times 2n$ matrix.
Set \begin{align*}
{\boldsymbol \varepsilon _k} &=\frac{1}{n}\boldsymbol{\phi}_k^{\prime}\bar{\boldsymbol{\phi}}_k - z{\mathbf I_2} - \frac{1}{{{n^2}}}\boldsymbol{\phi}_k^{\prime}{\mathbf X}_{nk}^*{{\left(\frac{1}{n}{\mathbf X_{nk}}{\mathbf X_{nk}^*} - z{\mathbf I_{2p - 2}}\right)}^{-1}}{\mathbf X_{nk}}\bar{\boldsymbol{\phi}}_k \\
&- \left(1 - z - {y_n} - {y_n}z{\rm E}{m_n}\left(z\right)\right){\mathbf I_2}
\end{align*}
and
\begin{align}\label{al:11}
{\delta _n} = & - \frac{1}{2p\left({1 - z - {y_n} - {y_n}z{\rm E}{m_n}\left(z\right)}\right)}\notag\\
\times&\sum\limits_{k = 1}^p {\rm Etr}\left\{{\boldsymbol\varepsilon _k}{\left(\left(1 - z - {y_n} - {y_n}z{\rm E}{m_n}(z)\right){\mathbf I_2} + {\boldsymbol\varepsilon _k}\right)^{ - 1}}\right\}\end{align}
where \ $y_n=p/n$.
This implies that $${\rm E}{m_n}\left(z\right) = \frac{1}{{1 - z - {y_n} - {y_n}z{\rm E}{m_n}\left(z\right)}} + {\delta _n}.$$
Solving ${\rm E}{m_n}\left(z\right)$ from the equation above, we get
\begin{align*}
{\rm E}m_n\left(z\right)=\frac{1}{2y_nz}\left(1-z-y_n+y_nz\delta_n\pm \sqrt{\left(1-z-y_n-y_nz\delta_n\right)^2-4y_nz}\right).
\end{align*}
As proved in the equation (3.17) of Bai \cite{Bai1993b}, we can assert that
\begin{align}\label{al:4}
{\rm E}m_n\left(z\right)=\frac{1-z-y_n+y_nz\delta_n+\sqrt{\left(1-z-y_n-y_nz\delta_n\right)^2-4y_nz}}{2y_nz}.
\end{align}
Comparing (\ref{al:3}) with (\ref{al:4}), it  suffices to show that
\begin{align*}
{\delta _n} \to 0.
\end{align*}

\begin{lemma}\label{lemma:2}
Under the conditions of Remark \ref{le:3}, for any $z=u+vi$ with $v>0$ and for any \ $k=1,\cdots,p$, we have
\begin{align}\label{al:8}
\left|{\rm Etr}\boldsymbol\varepsilon_k\right| \to 0.
\end{align}
\end{lemma}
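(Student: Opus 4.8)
The plan is to evaluate ${\rm E}\,{\rm tr}\,\boldsymbol\varepsilon_k$ term by term and to show that everything cancels up to an error of order $n^{-1}$; this is the quaternion analogue of the corresponding step in Bai \cite{Bai1993b}. Since $\psi$ respects products and conjugation, write $\boldsymbol{\phi}_k^{\prime}=\bigl(\psi(x_{1k}),\dots,\psi(x_{nk})\bigr)$ as a $2\times 2n$ complex matrix built from the $2\times 2$ blocks $\psi(x_{jk})$, so that $\bar{\boldsymbol{\phi}}_k=(\boldsymbol{\phi}_k^{\prime})^{*}$ and, using $\psi(x)\psi(x)^{*}=\psi(x\bar x)=\|x\|^{2}{\mathbf I_2}$, one has $\boldsymbol{\phi}_k^{\prime}\bar{\boldsymbol{\phi}}_k=\sum_{j=1}^{n}\psi(x_{jk})\psi(x_{jk})^{*}=\bigl(\sum_{j=1}^{n}\|x_{jk}\|^{2}\bigr){\mathbf I_2}$. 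By Remark \ref{le:3}, ${\rm E}\|x_{jk}\|^{2}=1$, hence ${\rm E}\,{\rm tr}\bigl(\tfrac1n\boldsymbol{\phi}_k^{\prime}\bar{\boldsymbol{\phi}}_k\bigr)=2$; moreover ${\rm tr}(-z{\mathbf I_2})=-2z$ and ${\rm tr}\bigl[(1-z-y_n-y_nz\,{\rm E}m_n(z)){\mathbf I_2}\bigr]=2(1-z-y_n-y_nz\,{\rm E}m_n(z))$. Thus only the quadratic-form term in $\boldsymbol\varepsilon_k$ remains to be identified.

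For that term, abbreviate ${\mathbf A}_k=\tfrac1n{\mathbf X_{nk}}{\mathbf X_{nk}^*}$ and recall that $\boldsymbol{\phi}_k^{\prime}$ is independent of ${\mathbf X_{nk}}$. Taking the partial expectation over $\boldsymbol{\phi}_k^{\prime}$ after cycling the $2\times 2$ factors under the trace, and using that the $(j,l)$ block of $\bar{\boldsymbol{\phi}}_k\boldsymbol{\phi}_k^{\prime}$ equals $\psi(x_{jk})^{*}\psi(x_{lk})=\psi(\bar x_{jk}x_{lk})$, whose expectation is $0$ for $j\ne l$ (independence and zero mean) and ${\rm E}\|x_{jk}\|^{2}{\mathbf I_2}={\mathbf I_2}$ for $j=l$, we get ${\rm E}\bigl[\bar{\boldsymbol{\phi}}_k\boldsymbol{\phi}_k^{\prime}\bigr]={\mathbf I_{2n}}$. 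Consequently
\begin{align*}
{\rm E}\,{\rm tr}\Bigl[\tfrac{1}{n^{2}}\boldsymbol{\phi}_k^{\prime}{\mathbf X_{nk}^*}({\mathbf A}_k-z{\mathbf I_{2p-2}})^{-1}{\mathbf X_{nk}}\bar{\boldsymbol{\phi}}_k\Bigr]
&=\tfrac1n{\rm E}\,{\rm tr}\bigl[({\mathbf A}_k-z{\mathbf I_{2p-2}})^{-1}{\mathbf A}_k\bigr]\\
&=\tfrac{2p-2}{n}+\tfrac zn\,{\rm E}\,{\rm tr}({\mathbf A}_k-z{\mathbf I_{2p-2}})^{-1},
\end{align*}
the last equality being $\mathbf A(\mathbf A-z\mathbf I)^{-1}=\mathbf I+z(\mathbf A-z\mathbf I)^{-1}$.

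Collecting the four pieces and using $2y_n\,{\rm E}m_n(z)=\tfrac1n{\rm E}\,{\rm tr}({\mathbf S_n}-z{\mathbf I_{2p}})^{-1}$ (which holds since $m_n(z)=\tfrac1{2p}{\rm tr}({\mathbf S_n}-z{\mathbf I_{2p}})^{-1}$ and $y_n=p/n$), every leading term cancels and one is left with
\[
{\rm E}\,{\rm tr}\,\boldsymbol\varepsilon_k=\frac2n+\frac zn\,{\rm E}\Bigl[{\rm tr}({\mathbf S_n}-z{\mathbf I_{2p}})^{-1}-{\rm tr}({\mathbf A}_k-z{\mathbf I_{2p-2}})^{-1}\Bigr].
\]
After a permutation of rows, ${\mathbf A}_k$ is a principal submatrix of ${\mathbf S_n}$ of co-dimension $2$, so the resolvent-trace difference is bounded by applying twice the elementary estimate $|{\rm tr}(B-z{\mathbf I})^{-1}-{\rm tr}(B'-z{\mathbf I})^{-1}|\le 1/\upsilon$, valid whenever $B'$ is a co-dimension-$1$ principal submatrix of $B$; this in turn follows from the block-inversion formula together with exactly the bound already established for $\gamma_k$ in Subsection \ref{se:1} (the relevant scalar has the form $\bigl|(1+\beta^{*}(B'-z)^{-2}\beta)/(d-z-\beta^{*}(B'-z)^{-1}\beta)\bigr|\le 1/\upsilon$). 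Therefore $\bigl|{\rm E}\,{\rm tr}\,\boldsymbol\varepsilon_k\bigr|\le \tfrac2n+\tfrac{2|z|}{n\upsilon}=O(n^{-1})\to0$, which proves the lemma. The only step requiring care is the bookkeeping with the $2\times 2$ quaternion blocks in the second paragraph — in particular the exact identity ${\rm E}[\bar{\boldsymbol{\phi}}_k\boldsymbol{\phi}_k^{\prime}]={\mathbf I_{2n}}$, which is what makes the quadratic-form term collapse to a resolvent trace; everything else is routine.
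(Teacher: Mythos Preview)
Your argument is correct and follows essentially the same route as the paper: compute ${\rm E}\,{\rm tr}\,\boldsymbol\varepsilon_k$ by reducing the quadratic-form term to $\tfrac1n{\rm E}\,{\rm tr}[({\mathbf A}_k-z{\mathbf I})^{-1}{\mathbf A}_k]$, use $\mathbf A(\mathbf A-z\mathbf I)^{-1}=\mathbf I+z(\mathbf A-z\mathbf I)^{-1}$, and bound the remaining resolvent-trace difference by $2/\upsilon$ (the paper simply cites its Lemma~\ref{lemma:7} here rather than reproving it via the $\gamma_k$ computation). The only slip is notational: since $\boldsymbol{\phi}_k^{\prime}$ is the $k$-th quaternion \emph{row} of $\mathbf X_n$, its entries are $x_{k1},\dots,x_{kn}$, not $x_{1k},\dots,x_{nk}$; this does not affect the argument.
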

\begin{proof} By Lemma \ref{lemma:7}, we have
 \begin{align*}
&\left|{\rm Etr}\boldsymbol\varepsilon_k\right|\\
=&\left| - \frac{1}{{{n^2}}}{\rm Etr}{\mathbf X}_{nk}^*{{\left(\frac{1}{n}{\mathbf X_{nk}}{\mathbf X_{nk}^*} - z{\mathbf I_{2p - 2}}\right)}^{-1}}{\mathbf X_{nk}}+2{y_n}+2 {y_n}z{\rm E}{m_n}\left(z\right)\right|\\
=&\left| - \frac{1}{n}{\rm Etr}{{\left(\frac{1}{n}{\mathbf X_{nk}}{\mathbf X_{nk}^*} - z{\mathbf I_{2p - 2}}\right)}^{-1}}\frac{1}{n}{\mathbf X_{nk}}{\mathbf X}_{nk}^* +2{y_n}+2 {y_n}z{\rm E}{m_n}\left(z\right)\right|\\
\le&\frac{2}{n}+\frac{\left|z\right|}{n}\left|{\rm E}\left[{\rm tr}{{\left(\frac{1}{n}{\mathbf X_{n}}{\mathbf X_{n}^*} - z{\mathbf I_{2p }}\right)}^{-1}}-{\rm tr}{{\left(\frac{1}{n}{\mathbf X_{nk}}{\mathbf X_{nk}^*} - z{\mathbf I_{2p - 2}}\right)}^{-1}}\right]\right|\\
\le&\frac{2}{n}+\frac{2\left|z\right|}{n\upsilon}
\to 0.\end{align*}
Then, the proof is complete.
\end{proof}
\begin{lemma}\label{lemma:6}
Under the conditions of Remark \ref{le:3}, for any $z=u+vi$ with $v>0$ and for any \ $k=1,\cdots,p$, we have
\begin{align*}
{\rm E}\left|{\rm tr}\boldsymbol\varepsilon_k^2\right| \to 0.
\end{align*}
\end{lemma}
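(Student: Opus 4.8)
The plan is to expand $\boldsymbol\varepsilon_k$ into its constituent pieces and bound the $L^2$-norm of each. Recall that
\[
\boldsymbol\varepsilon_k = \frac{1}{n}\boldsymbol{\phi}_k^{\prime}\bar{\boldsymbol{\phi}}_k - \frac{1}{n^2}\boldsymbol{\phi}_k^{\prime}{\mathbf X}_{nk}^*\Big(\tfrac1n{\mathbf X_{nk}}{\mathbf X_{nk}^*} - z{\mathbf I}\Big)^{-1}{\mathbf X_{nk}}\bar{\boldsymbol{\phi}}_k - \big(1 - z - y_n + y_n z\,\mathrm{E}m_n(z)\big)\mathbf I_2 - z\mathbf I_2,
\]
so writing $A_k = \tfrac1n{\mathbf X_{nk}}{\mathbf X_{nk}^*} - z{\mathbf I_{2p-2}}$ and $B_k = -\tfrac1n{\mathbf X}_{nk}^* A_k^{-1}{\mathbf X_{nk}}$, we have $\boldsymbol\varepsilon_k = \tfrac1n\boldsymbol{\phi}_k^{\prime}(\mathbf I_{2n} + B_k)\bar{\boldsymbol{\phi}}_k - c_n\mathbf I_2$ with $c_n = 1 + y_n z\,\mathrm{E}m_n(z)$ (after absorbing the $-z$ and $-y_n$ terms; I will recompute the exact constant in the write-up). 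The key observation, using the resolvent identity $\tfrac1n\mathbf X_{nk}^*A_k^{-1}\mathbf X_{nk} = \mathbf I_{2n} + z\,\tfrac1n\mathbf X_{nk}^*(\tfrac1n\mathbf X_{nk}\mathbf X_{nk}^* - z)^{-2}\mathbf X_{nk}\cdot(\text{something})$ — more cleanly, $\mathbf I + B_k = z\,\tfrac1n\mathbf X_{nk}^* A_k^{-1}\mathbf X_{nk}\cdot(\tfrac1n\mathbf X_{nk}^*\mathbf X_{nk})^{-1}$ type manipulations — is that $\mathbf I_{2n}+B_k$ is a deterministic-trace-controlled matrix independent of $\boldsymbol{\phi}_k$, and its normalized trace is $1 + y_n z\, m_n^{(k)}(z) + O(1/n)$ where $m_n^{(k)}$ is the Stieltjes transform of the reduced matrix.

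The main steps, in order: First, I would use independence of $\boldsymbol{\phi}_k$ from $\mathbf X_{nk}$ (hence from $B_k$) together with the quaternion structure of the entries (mean zero, variance one, truncated at $\eta_n\sqrt n$) to compute $\mathrm{E}[\tfrac1n\boldsymbol{\phi}_k^{\prime}(\mathbf I+B_k)\bar{\boldsymbol{\phi}}_k \mid \mathbf X_{nk}]$. Because a quaternion $2\times2$ block has the special form in the Type-definitions, the conditional expectation of the quadratic form is $\tfrac1n\mathrm{tr}(\mathbf I_{2n}+B_k)\cdot \mathbf I_2 = (1 + y_n z\, m_n^{(k)}(z))\mathbf I_2 + O_{\mathrm{a.s.}}(1/n)$, which matches $c_n\mathbf I_2$ up to the discrepancy $y_n z(\mathrm{E}m_n - m_n^{(k)})$; this is $O(1/n)$ by the rank-one interlacing bound already used in Lemma 3.7. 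Second, for the fluctuation part I would write $\mathrm{E}|\mathrm{tr}\,\boldsymbol\varepsilon_k^2| \le \mathrm{E}\|\boldsymbol\varepsilon_k\|_F^2$ (a $2\times2$ matrix), split $\boldsymbol\varepsilon_k$ into (i) centered quadratic form $\tfrac1n\boldsymbol{\phi}_k^{\prime}(\mathbf I+B_k)\bar{\boldsymbol{\phi}}_k - \mathrm{E}[\cdots\mid\mathbf X_{nk}]$ and (ii) the $O(1/n)$ deterministic remainder, and apply a quaternion version of Lemma 4.x (the $\mathrm{E}|\mathbf x^*\mathbf M\mathbf x - \mathrm{tr}\mathbf M|^2 \le C\|\mathbf M\|_F^2\cdot(\text{moment bound})$ inequality — here Lemma \ref{lemma:5}) conditionally on $\mathbf X_{nk}$. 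That gives a bound of order $\tfrac{1}{n^2}\mathrm{E}\|\mathbf I+B_k\|_F^2 \le \tfrac{C}{n^2}\cdot 2n\cdot\|\mathbf I + B_k\|^2$, and since $\|\mathbf I+B_k\| = \|z\,\tfrac1n\mathbf X_{nk}^*A_k^{-1}\mathbf X_{nk}(\cdots)\|$ is bounded by $|z|^2/v^2$ or so, this is $O(1/n) \to 0$. The truncation bound $\|x_{jk}\|\le\eta_n\sqrt n$ with $\eta_n\to0$ is what kills the higher-moment terms in Lemma \ref{lemma:5}.

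The main obstacle I expect is handling the quaternion block structure correctly when taking conditional expectations of the quadratic form $\boldsymbol{\phi}_k^{\prime}\mathbf M\bar{\boldsymbol{\phi}}_k$: unlike the scalar complex case, here $\boldsymbol{\phi}_k^{\prime}$ is a $2\times 2n$ matrix whose columns are the $2\times2$ complex representations of quaternions, so one must verify that the off-diagonal $2\times2$ entries of the expectation vanish and the diagonal ones coincide — this is exactly where the algebraic identities behind Corollary \ref{col:1} (Type-III inverse is Type-I) and the independence of real/imaginary quaternion components enter. A secondary technical point is controlling $\|\mathbf I_{2n}+B_k\|$ and $\|B_k\|_F^2/n$ uniformly; this follows from $\Im z = v > 0$ exactly as in the bound $|\gamma_k|\le 2/v$ in Section \ref{se:1}, but must be stated carefully for the $2\times2$-matrix-valued quadratic form. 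Once these two points are in place, the rest is routine summation of $O(1/n)$ terms.
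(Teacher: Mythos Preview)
Your outline follows essentially the same route as the paper: condition on $\mathbf X_{nk}$, identify the conditional mean of the quadratic form with (a scalar multiple of) $\mathbf I_2$, bound the fluctuation of the quadratic form by $\frac{C}{n^2}\|\mathbf I_{2n}+B_k\|_F^2$ with $\|\mathbf I_{2n}+B_k\|_F^2=O(n)$ via the singular-value computation, and handle the leftover trace discrepancy. The paper in fact first proves the stronger structural fact $\boldsymbol\varepsilon_k=\theta\,\mathbf I_2$ (using Corollary~\ref{col:1}), which turns your inequality $|{\rm tr}\,\boldsymbol\varepsilon_k^2|\le\|\boldsymbol\varepsilon_k\|_F^2$ into the identity ${\rm E}|{\rm tr}\,\boldsymbol\varepsilon_k^2|=\tfrac12{\rm E}|{\rm tr}\,\boldsymbol\varepsilon_k|^2$; this is cleaner but not strictly necessary for the present lemma.

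There is, however, one genuine gap in your treatment of piece~(ii). You write that the conditional mean matches $c_n\mathbf I_2$ ``up to the discrepancy $y_n z({\rm E}m_n - m_n^{(k)})$; this is $O(1/n)$ by the rank-one interlacing bound.'' But $m_n^{(k)}$ is a \emph{random} function of $\mathbf X_{nk}$, while ${\rm E}m_n$ is a number; interlacing (Lemma~\ref{lemma:7}) only gives $|m_n - m_n^{(k)}| = O(1/p)$ pointwise, not $|{\rm E}m_n - m_n^{(k)}| = O(1/p)$. What you actually need is ${\rm E}\,|{\rm E}m_n - m_n^{(k)}|^2 \to 0$, and this requires splitting further: $|{\rm E}m_n - {\rm E}m_n^{(k)}| = O(1/n)$ does follow from interlacing after taking expectations, but the remaining random piece $m_n^{(k)} - {\rm E}m_n^{(k)}$ must be controlled in $L^2$ by a separate martingale-difference argument (exactly the one from Section~\ref{se:1}, applied to the reduced matrix). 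The paper carries this out explicitly as its step~(2), obtaining ${\rm E}|\widetilde{\rm E}\,{\rm tr}\,\boldsymbol\varepsilon_k - {\rm E}\,{\rm tr}\,\boldsymbol\varepsilon_k|^2 \le 4|z|^2/(nv^2)$. Without this step your ``$O(1/n)$ deterministic remainder'' is neither deterministic nor $O(1/n)$, and the proof does not close. Once you insert this martingale bound, the rest of your sketch goes through.
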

\begin{proof} Write the form of \ $\left({\mathbf S_{n}} - z{\mathbf I_{2p}}\right)$
\[\left( {\begin{array}{*{20}{c}}
{{t_1}}&0&{{a_{12}}}&{{b_{12}}}& \cdots \\
0&{{t_1}}&{ - {{\bar b}_{12}}}&{{{\bar a}_{12}}}& \cdots \\
{{{\bar a}_{12}}}&{ - {b_{12}}}&{{t_2}}&0& \cdots \\
{{{\bar b}_{12}}}&{{a_{12}}}&0&{{t_2}}& \cdots \\
 \vdots & \vdots & \vdots & \vdots & \ddots
\end{array}} \right)\]
and denote $\mathbf R_k={\left(\frac{1}{n}{\mathbf X_{nk}}{\mathbf X_{nk}^*} - z{\mathbf I_{2p - 2}}\right)}^{-1}$. By Corollary \ref{col:1} and
\begin{align*}
&\frac{1}{n}\boldsymbol{\phi}_k^{\prime}\bar{\boldsymbol{\phi}}_k - z{\mathbf I_2} - \frac{1}{{{n^2}}}\boldsymbol{\phi}_k^{\prime}{\mathbf X}_{nk}^*{\mathbf R_k}{\mathbf X_{n k}}\bar{\boldsymbol{\phi}}_k\\
=&\left( {\begin{array}{*{20}{c}}
{\frac{1}{n}{\boldsymbol\alpha_k^{\prime}}{{\bar {\boldsymbol\alpha} }_k} - z - \frac{1}{{{n^2}}}{\boldsymbol\alpha_k^{\prime}}{\mathbf X_{nk}^*}{\mathbf R_k}{\mathbf X_{nk}}{{\bar {\boldsymbol\alpha} }_k}}&{\frac{1}{n}{\boldsymbol\alpha_k^{\prime}}{{\bar {\boldsymbol\beta} }_k} -\frac{1}{{{n^2}}}{\boldsymbol\alpha_k^{\prime}}{\mathbf X_{nk}^*}{\mathbf R_k}{\mathbf X_{nk}}{{\bar {\boldsymbol\beta} }_k}}\\
{\frac{1}{{{n}}}{\boldsymbol\beta_k^{\prime}}{{\bar {\boldsymbol\alpha} }_k}-\frac{1}{{{n^2}}}{\boldsymbol\beta_k^{\prime}}{\mathbf X_{nk}^*}{\mathbf R_k}{\mathbf X_{nk}}{{\bar {\boldsymbol\alpha} }_k}}&{\frac{1}{n}{\boldsymbol\beta_k^{\prime}}{{\bar {\boldsymbol\beta} }_k} - z - \frac{1}{{{n^2}}}{\boldsymbol\beta_k^{\prime}}{\mathbf X_{nk}^*}{\mathbf R_k}{\mathbf X_{nk}}{{\bar {\boldsymbol\beta} }_k}}
\end{array}}\right)
\end{align*}
where $\boldsymbol\alpha_k$ denotes the first column of $\boldsymbol{\phi}_k$ and $\boldsymbol\beta_k$ denotes the second column of $\boldsymbol{\phi}_k$,
we can get:
\begin{align*}
&{\frac{1}{n}{\boldsymbol\alpha_k^{\prime}}{{\bar {\boldsymbol\alpha} }_k} - z - \frac{1}{{{n^2}}}{\boldsymbol\alpha_k^{\prime}}{\mathbf X_{nk}^*}{\mathbf R_k}{\mathbf X_{nk}}{{\bar {\boldsymbol\alpha} }_k}}={\frac{1}{n}{\boldsymbol\beta_k^{\prime}}{{\bar {\boldsymbol\beta} }_k} - z - \frac{1}{{{n^2}}}{\boldsymbol\beta_k^{\prime}}{\mathbf X_{nk}^*}{\mathbf R_k}{\mathbf X_{nk}}{{\bar {\boldsymbol\beta} }_k}}
\end{align*}
and
\begin{align*}
{\frac{1}{n}{\boldsymbol\alpha_k^{\prime}}{{\bar {\boldsymbol\beta} }_k} -\frac{1}{{{n^2}}}{\boldsymbol\alpha_k^{\prime}}{\mathbf X_{nk}^*}{\mathbf R_k}{\mathbf X_{nk}}{{\bar {\boldsymbol\beta} }_k}}
={\frac{1}{{{n}}}{\boldsymbol\beta_k^{\prime}}{{\bar {\boldsymbol\alpha} }_k}-\frac{1}{{{n^2}}}{\boldsymbol\beta_k^{\prime}}{\mathbf X_{nk}^*}{\mathbf R_k}{\mathbf X_{nk}}{{\bar {\boldsymbol\alpha} }_k}}
=0.
\end{align*}
Note that
\begin{align*}
&\frac{1}{n}\boldsymbol{\phi}_k^{\prime}\bar{\boldsymbol{\phi}}_k - z{\mathbf I_2} - \frac{1}{{{n^2}}}\boldsymbol{\phi}_k^{\prime}{\mathbf X}_{nk}^*{\mathbf R_k}{\mathbf X_{n k}}\bar{\boldsymbol{\phi}}_k
=\boldsymbol\varepsilon_k+ \left(1 - z - {y_n} - {y_n}z{\rm E}{m_n}(z)\right){\mathbf I_2},
\end{align*}
thus we have
\begin{align*}
\boldsymbol\varepsilon_k=\left( {\begin{array}{*{20}{c}}
\theta &0\\
0&\theta
\end{array}}\right)
\end{align*}
where
\begin{align*}
\theta=\frac{1}{n}{\boldsymbol\alpha_k^{\prime}}{{\bar {\boldsymbol\alpha} }_k} - z - \frac{1}{{{n^2}}}{\boldsymbol\alpha_k^{\prime}}{\mathbf X_{nk}^*}{\mathbf R_k}{\mathbf X_{nk}}{{\bar {\boldsymbol\alpha} }_k}-\left(1 - z - {y_n} - {y_n}z{\rm E}{m_n}\left(z\right)\right).
\end{align*}
Let $\widetilde{\rm E}\left(\cdot\right)$ denote the conditional expectation given $\left\{\mathbf x_j,j=1,\cdots,n;j\neq k\right\}$, then we get
\begin{align}\label{al:5}
&{\rm E}\left|{\rm tr}\boldsymbol\varepsilon_k^2\right|=2{\rm E}\left|\theta\right|^2=\frac{1}{2}{\rm E}\left|{\rm tr}\boldsymbol\varepsilon_k\right|^2\notag\\
\leq&{2}\left[{\rm E}\left|{\rm tr}\boldsymbol\varepsilon_k-\widetilde{\rm E}{\rm tr}\boldsymbol\varepsilon_k\right|^2+{\rm E}\left|\widetilde{\rm E}{\rm tr}\boldsymbol\varepsilon_k-{\rm E}{\rm tr}\boldsymbol\varepsilon_k\right|^2+\left|{\rm E}{\rm tr}\boldsymbol\varepsilon_k\right|^2\right].
\end{align}
We will estimate the inequality above by the following three steps.

%\begin{description}
\textbf{(1):} We start from the bound of first term of (\ref{al:5}).\\
Write $\mathbf T=\left(t_{jl}\right)=\mathbf I_{2n}-\frac{1}{n}{\mathbf X}_{nk}^*{\mathbf R_k}{\mathbf X_{nk}}$, where $t_{jl}=\left(\begin{array}{cc}e_{jl}&f_{jl}\\h_{jl}&g_{jl}\end{array}\right)$, then we have
\begin{align*}
&{\rm tr}\boldsymbol\varepsilon_k-\widetilde{\rm E}{\rm tr}\boldsymbol\varepsilon_k\\
=&{\rm tr}\left(\frac{1}{n}\boldsymbol{\phi}_k^{\prime}\bar{\boldsymbol{\phi}}_k  - \frac{1}{{{n^2}}}\boldsymbol{\phi}_k^{\prime}{\mathbf X}_{nk}^*{\mathbf R_k}{\mathbf X_{nk}}\bar{\boldsymbol{\phi}}_k\right)-{\rm tr}\left(\mathbf I_2 - \frac{1}{{{n^2}}}{\mathbf X}_{nk}^*{\mathbf R_k}{\mathbf X_{nk}}\right)\\
=&\frac{1}{n}{\rm tr}\left(\boldsymbol{\phi}_k^{\prime}\mathbf T\bar{\boldsymbol{\phi}}_k-\mathbf T\right)\\
=&\frac{1}{n}\left(\sum_{j=1}^{n}{\rm tr}\left(\left\|x_{kj}\right\|^2-1\right)t_{jj}+\sum_{j\neq l}^{}{\rm tr}\left(x_{kl}^*x_{kj}t_{jl}\right)\right).
\end{align*}
By elementary calculation, we obtain
\begin{align}\label{al:7}
&\widetilde{\rm E}\left|{\rm tr}\boldsymbol\varepsilon_k-\widetilde{\rm E}{\rm tr}\boldsymbol\varepsilon_k\right|^2\notag\\
=&\frac{1}{n^2}\bigg(\sum_{j=1}^{n}\widetilde{\rm E}\left|{\rm tr}\left(\left\|x_{kj}\right\|^2-1\right)t_{jj}\right|^2+\sum_{j\neq l}^{}\widetilde{\rm E}\big[{\rm tr}\left(x_{kl}^*x_{kj}t_{jl}\right){\rm tr}\left(x_{kj}^*x_{kl}t_{jl}^*\right)\notag\\
&\qquad+{\rm tr}\left(x_{kl}^*x_{kj}t_{jl}\right){\rm tr}\left(x_{kl}^*x_{kj}t_{lj}^*\right)\big]\bigg)\notag\\
\le&\frac{1}{n^2}\left(\sum_{j=1}^{n}\widetilde{\rm E}\left(\left\|x_{kj}\right\|^2-1\right)^2\left|e_{jj}+g_{jj}\right|^2+2\sum_{j\neq l}\widetilde{\rm E}\left|{\rm tr}\left(x_{kl}^*x_{kj}t_{jl}\right)\right|^2\right)\notag\\
\le&\frac{C}{n^2}\left(\eta_n^2n\sum_{j=1}^{n}\left(\left|e_{jj}\right|^2
+\left|g_{jj}\right|^2\right)+\sum_{j\neq l}^{}\left(\left|e_{jl}\right|^2+\left|f_{jl}\right|^2+\left|g_{jl}\right|^2+\left|h_{jl}\right|^2\right)\right)\notag\\
\le&\frac{C\eta_n^2}{n}\sum_{j=1}^{n}\left(\left|e_{jj}\right|^2
+\left|g_{jj}\right|^2\right)+\frac{C}{n^2}\sum_{j, l}^{}\left(\left|e_{jl}\right|^2+\left|f_{jl}\right|^2+\left|g_{jl}\right|^2+\left|h_{jl}\right|^2\right)\notag\\
\le&\frac{C\eta_n^2}{n}{\rm tr} \mathbf T \mathbf T^*+\frac{C}{n^2}{\rm tr} \mathbf T \mathbf T^*.
\end{align}
To complete the estimation, we only need to show that  ${\rm tr} \mathbf T \mathbf T^*$ is a bounded random variable. For \ $\frac{1}{\sqrt n}{\mathbf X}_{nk}$, there exist \ $(2p-2)\times q$  orthonormal matrix\ $\mathbf U$ and \ $2n\times q$ orthonormal matrix \ $\mathbf V$ such that
\begin{align*}
\frac{1}{\sqrt n}{\mathbf X}_{nk}=\mathbf U {\rm diag}\left(s_1,\cdots,s_q\right)\mathbf V^*
\end{align*}
where \ $s_1,\cdots,s_q$ are the singular values of \ $\frac{1}{\sqrt n}{\mathbf X}_{nk}$ and $q=(2p-2)\land 2n$. Then, we get
\begin{align*}
&\mathbf I_{2n}-\mathbf T=\left(\frac{1}{\sqrt n}{\mathbf X}_{nk}^*\right){\mathbf R_k}\left(\frac{1}{\sqrt n}{\mathbf X}_{nk}\right)\\
=&\mathbf V {\rm diag}\left(s_1,\cdots,s_q\right)\mathbf U^*\mathbf U {\rm diag}\left(s_1^2-z,\cdots,s_q^2-z\right)^{-1}\mathbf U^*
\mathbf U {\rm diag}\left(s_1,\cdots,s_q\right)\mathbf V^*\\
=&\mathbf V {\rm diag}\left(\frac{s_1^2}{s_1^2-z},\cdots,\frac{s_q^2}{s_q^2-z}\right)\mathbf V^*.
\end{align*}
Thus
\begin{align*}
\mathbf T=\mathbf I_{2n}-\mathbf V {\rm diag}\left(\frac{s_1^2}{s_1^2-z},\cdots,\frac{s_q^2}{s_q^2-z}\right)\mathbf V^*
=\mathbf V {\rm diag} \left(\frac{-z}{s_1^2-z},\cdots,\frac{-z}{s_q^2-z}\right)\mathbf V^*
\end{align*}
which implies that
\begin{align}\label{al:6}
{\rm tr} \mathbf T \mathbf T^*=\sum_{j=1}^{q}\frac{\left|z\right|^2}{\left|s_j^2-z\right|^2}\le \frac{2n|z|^2}{\upsilon^2}.
\end{align}
By (\ref{al:7}) and (\ref{al:6}),
we obtain
\begin{align}\label{al:9}
{\rm E}\left|{\rm tr}\boldsymbol\varepsilon_k-\widetilde{\rm E}{\rm tr}\boldsymbol\varepsilon_k\right|^2\to 0.
\end{align}

\textbf{(2):} Next, we bound the second term of (\ref{al:5}). Note that
\begin{align*}
\widetilde{\rm E}{\rm tr}\boldsymbol\varepsilon_k-{\rm E}{\rm tr}\boldsymbol\varepsilon_k=\frac{z}{n}\left({\rm Etr}{\mathbf R_k}-{\rm tr}{\mathbf R_k}\right).
\end{align*}
Using the martingale decomposition method (similar to the proof of (\ref{eq:6})), we have
\begin{align}\label{al:10}
{\rm E}\left|\widetilde{\rm E}{\rm tr}\boldsymbol\varepsilon_k-{\rm E}{\rm tr}\boldsymbol\varepsilon_k\right|^2=&\frac{\left|z\right|^2}{n^2}{\rm E}\left|{\rm Etr}{\mathbf R_k}-{\rm tr}{\mathbf R_k}\right|^2\le\frac{4\left|z\right|^2}{n\upsilon^2}\to 0.
\end{align}

\textbf{(3):} Finally, combining  (\ref{al:8}), (\ref{al:5}), (\ref{al:9}) and (\ref{al:10}), we conclude that
\begin{align*}
{\rm E}\left|{\rm tr}\boldsymbol\varepsilon_k^2\right|\to 0,
\end{align*}
which complete the proof of the lemma.
%\end{description}
\end{proof}
\begin{lemma}
For any $z=u+vi$ with $v>0$, we have
\begin{align*}
\delta_n(z)\to 0.
\end{align*}
\end{lemma}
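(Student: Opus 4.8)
Throughout write $\alpha=1-z-y_n-y_nz{\rm E}m_n(z)$ and note $v>0$.

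The plan is to exploit the special structure of $\boldsymbol\varepsilon_k$ obtained in the proof of Lemma~\ref{lemma:6}, where it was shown that $\boldsymbol\varepsilon_k=\theta_k\mathbf I_2$ is a scalar multiple of the identity, $\theta_k$ being the complex number written $\theta$ there (I attach the subscript $k$ for bookkeeping). Consequently $\big(\alpha\mathbf I_2+\boldsymbol\varepsilon_k\big)^{-1}=(\alpha+\theta_k)^{-1}\mathbf I_2$, so ${\rm tr}\big\{\boldsymbol\varepsilon_k\big(\alpha\mathbf I_2+\boldsymbol\varepsilon_k\big)^{-1}\big\}=2\theta_k/(\alpha+\theta_k)$, and the definition (\ref{al:11}) becomes
\[
\delta_n=-\frac1{p\alpha}\sum_{k=1}^p{\rm E}\frac{\theta_k}{\alpha+\theta_k}
=-\frac1{p\alpha^2}\sum_{k=1}^p{\rm E}\theta_k+\frac1{p\alpha^2}\sum_{k=1}^p{\rm E}\frac{\theta_k^2}{\alpha+\theta_k},
\]
the second equality using $\theta_k/(\alpha+\theta_k)=\theta_k/\alpha-\theta_k^2/\big(\alpha(\alpha+\theta_k)\big)$. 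Since ${\rm tr}\boldsymbol\varepsilon_k=2\theta_k$ and ${\rm tr}\boldsymbol\varepsilon_k^2=2\theta_k^2$, it then suffices to bound $|\alpha|$ and $|\alpha+\theta_k|$ away from $0$ and to invoke Lemmas~\ref{lemma:2} and~\ref{lemma:6}.

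Next I would establish $|\alpha+\theta_k|\ge v$ and $|\alpha|\ge v$. For the first: by the block-inversion (Schur complement) identity behind Lemma~\ref{lemma:9}, together with the computation in the proof of Lemma~\ref{lemma:6} that $\frac1n\boldsymbol\phi_k'\bar{\boldsymbol\phi}_k-z\mathbf I_2-\frac1{n^2}\boldsymbol\phi_k'\mathbf X_{nk}^*\mathbf R_k\mathbf X_{nk}\bar{\boldsymbol\phi}_k=(\alpha+\theta_k)\mathbf I_2$, the matrix $(\alpha+\theta_k)^{-1}\mathbf I_2$ is exactly the $k$-th diagonal $2\times2$ block of the resolvent $(\mathbf S_n-z\mathbf I_{2p})^{-1}$; a principal submatrix has spectral norm no larger than that of the whole matrix, and $\|(\mathbf S_n-z\mathbf I_{2p})^{-1}\|\le v^{-1}$, so $|\alpha+\theta_k|\ge v$. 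For the second: since $\mathbf S_n\ge0$, writing $m_n(z)=\frac1{2p}\sum_i(\lambda_i-z)^{-1}$ with $\lambda_i\ge0$ the eigenvalues of $\mathbf S_n$ gives $1+z{\rm E}m_n(z)=\frac1{2p}\sum_i{\rm E}\,\lambda_i(\lambda_i-z)^{-1}$, and $\Im\big(\lambda_i(\lambda_i-z)^{-1}\big)=\lambda_iv|\lambda_i-z|^{-2}\ge0$; hence $\Im\big(1+z{\rm E}m_n(z)\big)\ge0$, so $\Im\alpha=-v-y_n\Im\big(1+z{\rm E}m_n(z)\big)\le-v$ and $|\alpha|\ge v$.

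Combining these bounds with the decomposition above,
\[
|\delta_n|\le\frac1{p|\alpha|^2}\sum_{k=1}^p|{\rm E}\theta_k|+\frac1{p|\alpha|^2}\sum_{k=1}^p\frac{{\rm E}|\theta_k|^2}{|\alpha+\theta_k|}
\le\frac1{2pv^2}\sum_{k=1}^p\big|{\rm E}\,{\rm tr}\boldsymbol\varepsilon_k\big|+\frac1{2pv^3}\sum_{k=1}^p{\rm E}\big|{\rm tr}\boldsymbol\varepsilon_k^2\big|.
\]
By Lemma~\ref{lemma:2} the first average on the right tends to $0$, and by Lemma~\ref{lemma:6} the second does as well, since the bounds produced in those two lemmas are uniform in $k$; therefore $\delta_n\to0$. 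The substantive point is the middle step: everything else is algebraic rearrangement plus two already-proved lemmas, but the lower bounds $|\alpha|\ge v$ and $|\alpha+\theta_k|\ge v$ are indispensable for controlling the denominators, and they rest respectively on the positive semidefiniteness of $\mathbf S_n$ and on the identification of $(\alpha+\theta_k)^{-1}\mathbf I_2$ as a principal $2\times2$ block of the resolvent. I would also verify explicitly that the convergences in Lemmas~\ref{lemma:2} and~\ref{lemma:6} hold uniformly in $k$, which is immediate from the $k$-free bounds in their proofs.
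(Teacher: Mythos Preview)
Your proof is correct and follows essentially the same route as the paper: the same algebraic decomposition of $\delta_n$ into a linear term and a quadratic remainder, the same lower bounds $|\alpha|\ge v$ and $|\alpha+\theta_k|\ge v$, and the same appeal to Lemmas~\ref{lemma:2} and~\ref{lemma:6}. The only cosmetic difference is that you obtain $|\alpha+\theta_k|\ge v$ via the resolvent-block identification and spectral norm, whereas the paper computes the imaginary part of $\alpha+\theta_k$ directly; both arguments are standard and yield the same bound.
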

\begin{proof} By (\ref{al:11}), we can write
\begin{small}
\begin{align*}
{\delta _n} = & - \frac{1}{2p\left({{1 - z - {y_n} - {y_n}z{\rm E}{m_n}\left(z\right)}}\right)^2}\sum\limits_{k = 1}^p{\rm Etr}{\boldsymbol\varepsilon _k}\\
+ &\frac{1}{2p\left({{1 - z - {y_n} - {y_n}z{\rm E}{m_n}\left(z\right)}}\right)^2}{\sum\limits_{k = 1}^p{\rm Etr}\left\{{\boldsymbol\varepsilon _k^2}{\left(\left(1 - z - {y_n} - {y_n}z{\rm E}{m_n}\left(z\right)\right){\mathbf I_2} + {\boldsymbol\varepsilon _k}\right)^{ - 1}}\right\}}.
\end{align*}
\end{small}
Note that $$\Im \left(1 - z - {y_n} - {y_n}z{\rm E}{m_n}\left(z\right)\right) <  - \upsilon,$$ which implies that
\begin{align}\label{al:12}
\left| {1 - z - {y_n} - {y_n}z{\rm E}{m_n}\left(z\right)} \right| > \upsilon.
\end{align}
By Lemma \ref{lemma:2} and (\ref{al:12}), we have
\begin{align}\label{al:14}
\left|\frac{1}{2p\left({{1 - z - {y_n} - {y_n}z{\rm E}{m_n}\left(z\right)}}\right)^2}\sum\limits_{k = 1}^p{\rm Etr}{\boldsymbol\varepsilon _k}\right|
\le\frac{1}{2p\upsilon^2}\sum\limits_{k = 1}^p\left|{\rm Etr}{\boldsymbol\varepsilon _k}\right|
\to0.
\end{align}
Rewrite
\begin{align*}
&\boldsymbol\varepsilon_k+ \left(1 - z - {y_n} - {y_n}z{\rm E}{m_n}\left(z\right)\right){\mathbf I_2}\\=&\left( {\begin{array}{*{20}{c}}
{\frac{1}{n}{\boldsymbol\alpha_k^{\prime}}{{\bar {\boldsymbol\alpha} }_k} - z - \frac{1}{{{n^2}}}{\boldsymbol\alpha_k^{\prime}}{\mathbf X_{nk}^*}{\mathbf R_k}{\mathbf X_{nk}}{{\bar {\boldsymbol\alpha} }_k}}&0\\
0&{\frac{1}{n}{\boldsymbol\beta_k^{\prime}}{{\bar {\boldsymbol\beta} }_k} - z - \frac{1}{{{n^2}}}{\boldsymbol\beta_k^{\prime}}{\mathbf X_{nk}^*}{\mathbf R_k}{\mathbf X_{nk}}{{\bar {\boldsymbol\beta} }_k}}
\end{array}}\right)\\+ &\left(1 - z - {y_n} - {y_n}z{\rm E}{m_n}\left(z\right)\right){\mathbf I_2}.
\end{align*}
Note that  $$\begin{array}{l}
\Im \left({\frac{1}{n}{\boldsymbol\alpha_k^{\prime}}{{\bar {\boldsymbol\alpha} }_k} - z - \frac{1}{{{n^2}}}{\boldsymbol\alpha_k^{\prime}}{\mathbf X_{nk}^*}{{\left(\frac{1}{n}{\mathbf X_{nk}}{\mathbf X_{nk}^*} - z{\mathbf I_{2p - 2}}\right)}^{ - 1}}{\mathbf X_{nk}}{{\bar {\boldsymbol\alpha} }_k}}\right)\\
 =  -\upsilon\left( {1 + {\boldsymbol\alpha_k^{\prime}}{\mathbf X_{nk}^*}{{\left[ {{{\left(\frac{1}{n}{\mathbf X_{nk}}{\mathbf X_{nk}^*} - z{\mathbf I_{2p - 2}}\right)}^2} + {\upsilon^2}{\mathbf I_{2p - 2}}} \right]}^{ - 1}}{\mathbf X_{nk}}{{\bar {\boldsymbol\alpha} }_k}} \right) <  - \upsilon\end{array}$$
which implies that
\begin{align}\label{al:13}
\left|\left({\frac{1}{n}{\boldsymbol\alpha_k^{\prime}}{{\bar {\boldsymbol\alpha} }_k} - z - \frac{1}{{{n^2}}}{\boldsymbol\alpha_k^{\prime}}{\mathbf X_{nk}^*}{{\left(\frac{1}{n}{\mathbf X_{nk}}{\mathbf X_{nk}^*} - z{\mathbf I_{2p - 2}}\right)}^{ - 1}}{\mathbf X_{nk}}{{\bar {\boldsymbol\alpha} }_k}}\right) \right| > \upsilon.
\end{align}
By Lemma \ref{lemma:6}, (\ref{al:12}) and (\ref{al:13}), we have
\begin{small}
\begin{align}\label{al:15}
&\left|\frac{1}{2p\left({{1 - z - {y_n} - {y_n}z{\rm E}{m_n}\left(z\right)}}\right)^2}{\sum\limits_{k = 1}^p{\rm Etr}\left\{{\boldsymbol\varepsilon _k^2}{\left(\left(1 - z - {y_n} - {y_n}z{\rm E}{m_n}\left(z\right)\right){\mathbf I_2} + {\boldsymbol\varepsilon _k}\right)^{ - 1}}\right\}}\right|\notag\\
\le&\frac{1}{2p\upsilon^3}\sum\limits_{k = 1}^p{\rm E}\left|{\rm tr}{\boldsymbol\varepsilon _k^2}\right|
\to0.
\end{align}
\end{small}
Combining (\ref{al:14}) and (\ref{al:15}), we get
\begin{small}
\begin{align*}
\left|{\delta _n}\right| \le& \left| \frac{1}{2p\left({{1 - z - {y_n} - {y_n}z{\rm E}{m_n}\left(z\right)}}\right)^2}\sum\limits_{k = 1}^p{\rm Etr}{\boldsymbol\varepsilon _k}\right|\\
+ &\left|\frac{1}{2p\left({{1 - z - {y_n} - {y_n}z{\rm E}{m_n}\left(z\right)}}\right)^2}{\sum\limits_{k = 1}^p{\rm Etr}\left\{{\boldsymbol\varepsilon _k^2}{\left(\left(1 - z - {y_n} - {y_n}z{\rm E}{m_n}\left(z\right)\right){\mathbf I_2} + {\boldsymbol\varepsilon _k}\right)^{ - 1}}\right\}}\right|\\
\to &0.
\end{align*}
\end{small}
Then the proof of this lemma is complete.
\end{proof}
Now, we have completed the proof of the mean convergence
$${\rm E}{m_n}\left(z\right) \to m\left(z\right).$$

\subsubsection{ Completion of the proof of Theorem \ref{th:1}}
We need the last part of Chapter 2 of \cite{bai2010spectral} for completing the proof Theorem \ref{th:1}. For the readers convenience we repeat here. By \ Section \ref{se:1} and \ Section \ref{se:2}, for any fixed $z\in \mathbb C^+$, we have
\begin{align*}
m_n\left(z\right)\to m\left(z\right),\mbox{a.s..}
\end{align*}
That is, for each\  $z\in \mathbb C^+$, there exists a null set $N_z$ (i.e.,$\mbox{P}\left(N_z\right)=0$) such that
$$m_n\left(z,w\right)\to m\left(z\right), \  \mbox{for all}\  w\in N_z^c.$$
Now, let \ $\mathbb C_0^+$ be a dense subset of \ $\mathbb C^+$ (e.g., all $z$ of rational real and imaginary parts) and let $N=\bigcup_{z\in \mathbb C_0^+} N_{z}$. Then
$$m_n\left(z,w\right)\to m\left(z\right), \ \mbox{for all } w\in N^c \mbox{and} \ z\in\mathbb C_0^+.$$
Let \ $\mathbb C_m^+=\left\{z\in\mathbb C^+,\ \Im z >1/m,\ |z|\le m\right\}$. When \ $z\in\mathbb C_m^+$, we have $|s_n\left(z\right)|\le m$. Applying Lemma  \ref{lemma:12}, we have
$$m_n\left(z,w\right)\to m\left(z\right), \ \mbox{for all } \ w\in N^c \ \mbox{and} \ z\in \mathbb C_m^+.$$
Since the convergence above holds for every \ $m$, we conclude that
$$m_n\left(z,w\right)\to m\left(z\right), \ \mbox{for all } \ w\in N^c \ \mbox{and} \ z\in \mathbb C^+.$$
Applying Lemma \ref{lemma:13}, we conclude that
$$F^{\mathbf S_n}\overset{w}{\rightarrow} F, \ \mbox{a.s..}$$

\section{Appendix}
We introduce some results that will be used in this paper.
\begin{lemma}[Corollary A.42 in \cite{bai2010spectral}]\label{lemma:4}
Let $A$ and $B$ be two $p \times n$  matrices and denote the ESD of $S = A{A^ * }$ and $\widetilde S = B{B^ * }$   by ${F^S}$ and ${F^{\widetilde S}}$.  Then,
$${L^4}\left({F^S},{F^{\widetilde S}}\right) \le \frac{2}{{{p^2}}}\left(tr\left(A{A^ * } + B{B^ * }\right)\right)\left(tr\left[\left(A - B\right){\left(A - B\right)^ * }\right]\right),$$
 where $L\left(\cdot,\cdot \right)$  denotes the L\'{e}vy distance.
\end{lemma}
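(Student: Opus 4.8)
The plan is to deduce the inequality from three ingredients: a comparison of the L\'evy distance with the $L^{1}$ distance $\int_{\mathbb R}|F(x)-G(x)|\,dx$ of distribution functions; the fact that for empirical spectral distributions this $L^{1}$ distance is at most $\tfrac1p$ times the $\ell^{1}$ norm of the vector of sorted eigenvalue differences; and the Hoffman--Wielandt (Mirsky) inequality for singular values.

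First I would record the elementary bound $L^{2}(F,G)\le\int_{\mathbb R}|F(x)-G(x)|\,dx$, valid for any two distribution functions. Indeed, if $L(F,G)=\ell>0$ and $0<\varepsilon<\ell$, then by the definition of the L\'evy distance there is a point $x_{0}$ at which, say, $G(x_{0})>F(x_{0}+\varepsilon)+\varepsilon$; monotonicity of $F$ and $G$ then gives $G(t)-F(t)>\varepsilon$ for every $t\in[x_{0},x_{0}+\varepsilon]$, so that $\int_{\mathbb R}|F-G|>\varepsilon^{2}$, and letting $\varepsilon\uparrow\ell$ proves the bound (the reversed inequality in the definition is handled symmetrically).

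Next I would pass to singular values. Let $\sigma_{1}\ge\sigma_{2}\ge\cdots$ and $\tau_{1}\ge\tau_{2}\ge\cdots$ be the singular values of $A$ and of $B$, padded by zeros so that each list has $\min(p,n)$ terms; then the eigenvalues of $S=AA^{*}$ are $\sigma_{1}^{2},\sigma_{2}^{2},\ldots$ together with $p-\min(p,n)$ zeros, and likewise for $\widetilde S=BB^{*}$. The coupling matching the $i$-th largest eigenvalue of $S$ with the $i$-th largest of $\widetilde S$ then yields $\int_{\mathbb R}|F^{S}-F^{\widetilde S}|\le\tfrac1p\sum_{i}|\sigma_{i}^{2}-\tau_{i}^{2}|$, and combining this with the first step and with the Cauchy--Schwarz inequality applied to $|\sigma_{i}^{2}-\tau_{i}^{2}|=|\sigma_{i}-\tau_{i}|(\sigma_{i}+\tau_{i})$ gives
\[
L^{4}(F^{S},F^{\widetilde S})\le\Big(\tfrac1p\sum_{i}|\sigma_{i}^{2}-\tau_{i}^{2}|\Big)^{2}\le\tfrac1{p^{2}}\Big(\sum_{i}(\sigma_{i}-\tau_{i})^{2}\Big)\Big(\sum_{i}(\sigma_{i}+\tau_{i})^{2}\Big).
\]
Finally, $(\sigma_{i}+\tau_{i})^{2}\le2(\sigma_{i}^{2}+\tau_{i}^{2})$ gives $\sum_{i}(\sigma_{i}+\tau_{i})^{2}\le2\,{\rm tr}(AA^{*}+BB^{*})$, and the Hoffman--Wielandt inequality for singular values gives $\sum_{i}(\sigma_{i}-\tau_{i})^{2}\le{\rm tr}[(A-B)(A-B)^{*}]$; substituting these two estimates into the display yields exactly the asserted inequality.

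I expect the only non-routine ingredient to be the Hoffman--Wielandt bound for singular values, which I would deduce from the Hermitian Hoffman--Wielandt theorem by passing to the self-adjoint dilations $\widehat A=\left(\begin{smallmatrix}0&A\\A^{*}&0\end{smallmatrix}\right)$ and the analogous $\widehat B$: their eigenvalues are $\pm\sigma_{i}$ (resp.\ $\pm\tau_{i}$) together with $|p-n|$ zeros, while $\|\widehat A-\widehat B\|_{F}^{2}=2\,{\rm tr}[(A-B)(A-B)^{*}]$ and matching the sorted spectra of $\widehat A$ and $\widehat B$ costs $2\sum_{i}(\sigma_{i}-\tau_{i})^{2}$ (the zero blocks cancelling). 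It is also worth noting, for the later applications, that the parameter $p$ here is the row dimension of $A$ and $B$, so that when the lemma is used for the $2p\times 2n$ complex representation of a quaternion matrix it is being invoked with $p$ replaced by $2p$.
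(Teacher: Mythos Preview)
The paper does not prove this lemma; it is quoted in the appendix as Corollary~A.42 from Bai and Silverstein's monograph without argument. Your proposal supplies a complete and correct proof, and it follows precisely the standard route used in that reference: bound the L\'evy distance by the $L^{1}$ distance of the distribution functions, express the latter via sorted eigenvalue differences, factor $|\sigma_i^2-\tau_i^2|$ and apply Cauchy--Schwarz, and finish with Mirsky's singular-value perturbation bound. Your remark about the $2p$ replacing $p$ in the quaternion setting is also apt.
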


\begin{lemma}[Theorem A.44 in \cite{bai2010spectral}]\label{lemma:3}
Let $A$ and $B$ be $p \times n$ complex matrices. Then, $$\left\| {{F^{A{A^ * }}} - {F^{B{B^ * }}}} \right\| \le \frac{1}{p}rank\left(A - B\right).$$
\end{lemma}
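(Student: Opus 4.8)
The plan is to reduce the estimate to the case of a rank-one change in the rectangular matrix and then chain such estimates by the triangle inequality for the sup-norm. First I would write, via the singular value decomposition of $A-B$, the difference as $A-B=\sum_{i=1}^{r}\mathbf u_i\mathbf v_i^{*}$ with $r=\mathrm{rank}(A-B)$, $\mathbf u_i\in\mathbb C^{p}$, $\mathbf v_i\in\mathbb C^{n}$, and set $A_0=A$, $A_j=A-\sum_{i=1}^{j}\mathbf u_i\mathbf v_i^{*}$, so that $A_r=B$ and each consecutive difference $A_{j-1}-A_j=\mathbf u_j\mathbf v_j^{*}$ has rank at most one. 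If one shows
\[
\bigl\|F^{A_{j-1}A_{j-1}^{*}}-F^{A_{j}A_{j}^{*}}\bigr\|\le \tfrac1p,\qquad j=1,\dots,r,
\]
then summing over $j$ gives $\|F^{AA^{*}}-F^{BB^{*}}\|\le r/p$, which is the claim. (Equivalently one can invoke Lemma \ref{lemma:4}, but the clean constant $1$ seems to need the explicit rank-one reduction rather than any Levy-distance bound.)

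So the heart of the matter is the rank-one step. Let $\widetilde A=A-\mathbf u\mathbf v^{*}$. The key fact is singular value interlacing: $s_{i+1}(A)\le s_i(\widetilde A)\le s_{i-1}(A)$ for all admissible $i$. I would prove this from the Courant--Fischer min-max characterization $s_i(A)=\min_{\dim W=n-i+1}\ \max_{0\ne x\in W}\|Ax\|/\|x\|$: taking a subspace $W_0$ of dimension $n-i+1$ that realizes $s_i(\widetilde A)$, the intersection $W_0\cap\{\mathbf v\}^{\perp}$ has dimension at least $n-i$, and on it $Ax=\widetilde A x$, which yields $s_{i+1}(A)\le s_i(\widetilde A)$; the reverse inequality follows by exchanging the roles of $A$ and $\widetilde A$. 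Since the eigenvalues of $AA^{*}$ are exactly the numbers $s_i(A)^{2}$ for $i=1,\dots,\min(p,n)$ together with $\max(p-n,0)$ zeros --- and likewise for $\widetilde A\widetilde A^{*}$ with the same number of zeros --- squaring (monotone on $[0,\infty)$) turns the singular value interlacing into interlacing of the ordered eigenvalues of $AA^{*}$ and $\widetilde A\widetilde A^{*}$. Interlacing of two ordered $p$-tuples forces their counting functions, hence their empirical distribution functions, to differ by at most $1/p$ at every point, which gives the displayed rank-one bound.

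The only genuine bookkeeping is the treatment of the zero eigenvalues when $p\ne n$: when $p>n$ both $AA^{*}$ and $\widetilde A\widetilde A^{*}$ carry the same mass $(p-n)/p$ at the origin, so this mass cancels in $F^{AA^{*}}-F^{\widetilde A\widetilde A^{*}}$ and the $1/p$ bound persists, while when $p\le n$ there are no extra zeros and the interlacing is applied directly. I should also note that the tempting shortcut of passing to the $(p+n)\times(p+n)$ Hermitian dilations $\bigl(\begin{smallmatrix}0&A\\ A^{*}&0\end{smallmatrix}\bigr)$ and invoking the Hermitian rank inequality $\|F^{C}-F^{D}\|\le p^{-1}\mathrm{rank}(C-D)$ (Lemma \ref{lemma:3} in the Hermitian case) does not suffice here, because that dilation doubles the rank of the perturbation and hence only produces a constant larger by a factor of $2$; the argument must go through the rectangular interlacing above. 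The main obstacle, such as it is, is merely to state the singular value interlacing cleanly and to keep the index bookkeeping for the zero block straight; none of the estimates in this lemma is delicate.
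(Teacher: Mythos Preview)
The paper does not prove this lemma at all: it is stated in the Appendix as Theorem~A.44 of Bai--Silverstein and is merely quoted for use in the truncation step, so there is no ``paper's proof'' to compare against. Your argument is correct and is in fact the standard proof of this rank inequality: decompose $A-B$ via its SVD into $r=\mathrm{rank}(A-B)$ rank-one pieces, reduce to a single rank-one perturbation $\widetilde A=A-\mathbf u\mathbf v^{*}$, establish the singular value interlacing $s_{i+1}(A)\le s_i(\widetilde A)\le s_{i-1}(A)$ via Courant--Fischer on $\{\mathbf v\}^{\perp}$, and deduce that the counting functions of the eigenvalue sequences of $AA^{*}$ and $\widetilde A\widetilde A^{*}$ differ by at most one. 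Your handling of the $p>n$ case (both matrices carry the same block of $p-n$ zero eigenvalues, which cancels) and your remark that the Hermitian dilation shortcut loses a factor of $2$ are both accurate. Nothing is missing.
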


\begin{lemma}[Bernstein's inequality]\label{lemma:10}
If \ $\tau_1,\cdots,\tau_n$ are independent random variables with mean zero and uniformly bounded by $b$, then, for any $\varepsilon > 0$,
\begin{displaymath}
{\rm P}\left(\left|\sum_{j=1}^{n}\tau_j\right|\ge \varepsilon\right)\le 2exp\left(-\varepsilon^2/\left[2\left(B_n^2+b\varepsilon\right)\right]\right)
\end{displaymath}
where $ B_n^2={\rm E}\left(\tau_1+\cdots+\tau_n\right)^2.$
\end{lemma}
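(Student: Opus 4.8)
The statement is the classical Bernstein inequality, and I would prove it by the exponential-moment (Chernoff) method. Write $S=\sum_{j=1}^n\tau_j$ and $\sigma_j^2=\mathrm E\tau_j^2$; since the $\tau_j$ are independent with common mean $0$, we have $B_n^2=\sum_{j=1}^n\sigma_j^2$. It suffices to establish the one-sided bound
\[
\mathrm P\left(S\ge\varepsilon\right)\le\exp\left(-\frac{\varepsilon^2}{2\left(B_n^2+b\varepsilon\right)}\right),
\]
because applying this bound to the variables $-\tau_1,\dots,-\tau_n$ (which are again mean zero, bounded by $b$, and have the same $B_n^2$) and adding the two tail estimates produces the factor $2$ and the two-sided statement, using $\{|S|\ge\varepsilon\}=\{S\ge\varepsilon\}\cup\{-S\ge\varepsilon\}$. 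We may assume $B_n^2>0$, the case $B_n^2=0$ being trivial.

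The first step is a bound on the moment generating function of a single summand. Fix $h$ with $0<h<1/b$. Expanding $e^{h\tau_j}$ and using $\mathrm E\tau_j=0$ together with $\left|\mathrm E\tau_j^k\right|\le b^{k-2}\sigma_j^2$ for $k\ge2$ (a consequence of $|\tau_j|\le b$), one gets
\[
\mathrm E e^{h\tau_j}\le 1+\frac{\sigma_j^2}{b^2}\sum_{k\ge2}\frac{(hb)^k}{k!}=1+\frac{\sigma_j^2}{b^2}\left(e^{hb}-1-hb\right).
\]
I would then use the elementary inequality $e^x-1-x\le x^2/[2(1-x)]$ for $0\le x<1$, which holds termwise because $1/k!\le1/2$ for all $k\ge2$; taking $x=hb$ and then $1+t\le e^t$ yields $\mathrm E e^{h\tau_j}\le\exp\left(h^2\sigma_j^2/[2(1-hb)]\right)$.

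Multiplying over $j$ and invoking independence gives $\mathrm E e^{hS}\le\exp\left(h^2B_n^2/[2(1-hb)]\right)$, so by Markov's inequality, for every $0<h<1/b$,
\[
\mathrm P\left(S\ge\varepsilon\right)\le e^{-h\varepsilon}\,\mathrm E e^{hS}\le\exp\left(-h\varepsilon+\frac{h^2B_n^2}{2(1-hb)}\right).
\]
The final step is the choice $h=\varepsilon/(B_n^2+b\varepsilon)$, which lies in $(0,1/b)$; then $1-hb=B_n^2/(B_n^2+b\varepsilon)$ and the exponent simplifies to exactly $-\varepsilon^2/[2(B_n^2+b\varepsilon)]$, giving the one-sided bound and hence the lemma.

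The computation is entirely routine, with no real obstacle. The one point that needs a little care is keeping the single-summand estimate sharp enough — that is, using $e^x-1-x\le x^2/[2(1-x)]$ rather than a cruder substitute — so that the explicit value of $h$ lands on precisely the exponent $-\varepsilon^2/[2(B_n^2+b\varepsilon)]$ claimed in the statement and not on a weaker variant (e.g.\ with $b\varepsilon/3$ in the denominator).
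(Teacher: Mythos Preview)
Your proof is correct: the Chernoff bound with the single-summand estimate $\mathrm E e^{h\tau_j}\le\exp\bigl(h^2\sigma_j^2/[2(1-hb)]\bigr)$ and the choice $h=\varepsilon/(B_n^2+b\varepsilon)$ indeed yields exactly the stated exponent, and the symmetrization to get the two-sided bound is fine. There is nothing to compare against, since the paper does not prove this lemma at all --- it is listed in the appendix as a classical auxiliary result (Bernstein's inequality) and quoted without proof; your argument is precisely the standard textbook derivation one would supply if a proof were required.
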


\begin{lemma}[see $\left(A.1.12\right)$ in \cite{bai2010spectral}]\label{lemma:7}
Let $z = u + iv, v > 0, $ and let $A$ be an $n \times n$ Hermitian matrix.  ${A_k}$ be the k-th major sub-matrix of $A$ of order $(n-1)$, to be the matrix resulting from the $k$-th row and column from $A$.  Then
$$\left| {{\rm tr}{{\left(A - z{I_n}\right)}^{ - 1}} -{ \rm tr}{{\left({A_k} - z{I_{n - 1}}\right)}^{ - 1}}} \right| \le \frac{1}{\upsilon}.$$
\end{lemma}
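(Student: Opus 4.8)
The plan is to reduce to the case $k=n$, which is harmless after a simultaneous row/column permutation (this leaves the eigenvalues of $A$, and hence $\mathrm{tr}(A-zI_n)^{-1}$, unchanged and turns $A_k$ into the bottom-right principal submatrix), and then to exploit the block structure of $A-zI_n$. Writing
\[
A=\begin{pmatrix} A_n & b\\ b^* & a\end{pmatrix},\qquad b\in\mathbb C^{n-1},\ a\in\mathbb R,
\]
the matrix $A_n$ is precisely the principal submatrix in the statement. The key object will be the Schur complement $h(z)=a-z-b^*(A_n-zI_{n-1})^{-1}b$, since $h(z)^{-1}$ is the $(n,n)$ entry of $(A-zI_n)^{-1}$ and its imaginary part is easy to control.

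First I would derive an exact formula for the trace difference. Applying the block-inverse formula together with the Sherman--Morrison identity to the top-left $(n-1)\times(n-1)$ block of $(A-zI_n)^{-1}$, and adding the bottom-right entry $h(z)^{-1}$, one obtains
\[
{\rm tr}(A-zI_n)^{-1}-{\rm tr}(A_n-zI_{n-1})^{-1}=\frac{1+b^*(A_n-zI_{n-1})^{-2}b}{h(z)} .
\]
Equivalently, one may differentiate the identity $\log\det(A-zI_n)=\log\det(A_n-zI_{n-1})+\log h(z)$ with respect to $z$. Either way, this reduces the lemma to showing that the right-hand side has modulus at most $1/v$.

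The last step is an estimate through the spectral decomposition $A_n=\sum_{j}\mu_j e_je_j^*$. Setting $c_j=|e_j^*b|^2\ge 0$, one has $b^*(A_n-zI_{n-1})^{-1}b=\sum_j c_j/(\mu_j-z)$ and $b^*(A_n-zI_{n-1})^{-2}b=\sum_j c_j/(\mu_j-z)^2$, so the numerator is bounded in modulus by $1+\sum_j c_j/|\mu_j-z|^2$; on the other hand, since $\Im(\mu_j-z)^{-1}=v/|\mu_j-z|^2>0$, the imaginary part of $h(z)$ equals $-v\bigl(1+\sum_j c_j/|\mu_j-z|^2\bigr)$, whence $|h(z)|\ge v\bigl(1+\sum_j c_j/|\mu_j-z|^2\bigr)$. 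Dividing, the common factor cancels and the bound $1/v$ drops out.

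There is no serious obstacle here: the computation is elementary and the result is classical (it is quoted as (A.1.12) in \cite{bai2010spectral}). The only thing to be careful about is the sign bookkeeping, so that the same nonnegative quantity $1+\sum_j c_j/|\mu_j-z|^2$ appears both as the bound for the numerator and inside the lower bound for $|h(z)|$; a cruder approach via Cauchy eigenvalue interlacing only gives the non-sharp constant $\pi/v$, so it is precisely this cancellation that yields the stated $1/v$.
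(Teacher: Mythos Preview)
The paper does not give its own proof of this lemma; it is quoted from \cite{bai2010spectral} (equation (A.1.12) there) and stated in the Appendix without argument. Your proof is correct and is in fact the standard derivation: the block-inverse/Schur-complement computation yields exactly
\[
{\rm tr}(A-zI_n)^{-1}-{\rm tr}(A_n-zI_{n-1})^{-1}=\frac{1+b^*(A_n-zI_{n-1})^{-2}b}{h(z)},
\]
and the spectral estimate then gives $|1+b^*(A_n-zI_{n-1})^{-2}b|\le 1+\sum_j c_j/|\mu_j-z|^2$ and $|h(z)|\ge|\Im h(z)|=v\bigl(1+\sum_j c_j/|\mu_j-z|^2\bigr)$, so the common factor cancels and the bound $1/v$ follows. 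There is nothing to compare against in the present paper beyond the citation.
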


\begin{lemma}[Theorem A.4 in \cite{bai2010spectral}]\label{lemma:9}
For an $n\times n$ Hermitian $A$,define $A_k$,called a major submatrix of order $n-1$,to be the matrix resulting from the $k$-th row and column from $A$.
If both $A$ and $A_k$,$k=1,\cdots,n$, are nonsingular,and if we write $A^{-1}=[a^{kl}]$,then
\begin{displaymath}
a^{kk}=\frac{1}{a_{kk}-\alpha_k^*A_k^{-1}\beta_k},
\end{displaymath}
and hence
\begin{displaymath}
{\rm tr}\left(A^{-1}\right)=\sum_{k=1}^{n}\frac{1}{a_{kk}-\alpha_k^*A_k^{-1}\beta_k},
\end{displaymath}
\end{lemma}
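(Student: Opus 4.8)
The plan is to deduce this from the standard Schur complement formula for block matrices, so no serious machinery is needed. Fix $k$ and let $P_k$ be the permutation matrix that sends the $k$-th coordinate to the last position. Conjugating by $P_k$ brings $A$ into block form
\begin{displaymath}
P_k A P_k^* = \begin{pmatrix} A_k & \beta_k \\ \alpha_k^* & a_{kk} \end{pmatrix},
\end{displaymath}
where $\alpha_k$ collects the off-diagonal entries of the $k$-th column of $A$ and $\beta_k$ the corresponding entries of the $k$-th row written as a column vector (for Hermitian $A$ one has $\beta_k=\alpha_k$, but this is not needed). Since conjugation by a permutation matrix merely relabels diagonal entries of the inverse, the $(k,k)$ entry $a^{kk}$ of $A^{-1}$ is exactly the bottom-right entry of $\bigl(P_kAP_k^*\bigr)^{-1}$.

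Next I would use the block factorization
\begin{displaymath}
\begin{pmatrix} A_k & \beta_k \\ \alpha_k^* & a_{kk}\end{pmatrix}
= \begin{pmatrix} I & 0 \\ \alpha_k^* A_k^{-1} & 1\end{pmatrix}
\begin{pmatrix} A_k & 0 \\ 0 & a_{kk}-\alpha_k^* A_k^{-1}\beta_k\end{pmatrix}
\begin{pmatrix} I & A_k^{-1}\beta_k \\ 0 & 1\end{pmatrix},
\end{displaymath}
which is valid because $A_k$ is nonsingular. Taking determinants gives $\det A = \det A_k\cdot\bigl(a_{kk}-\alpha_k^*A_k^{-1}\beta_k\bigr)$; since $A$ and $A_k$ are both nonsingular, the scalar Schur complement $a_{kk}-\alpha_k^*A_k^{-1}\beta_k$ is nonzero, so no division by zero can occur. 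Inverting the three factors (the outer two are unit triangular, hence trivially invertible, and the middle one is block diagonal) and reading off the $(2,2)$ entry yields
\begin{displaymath}
a^{kk}=\frac{1}{a_{kk}-\alpha_k^*A_k^{-1}\beta_k}.
\end{displaymath}

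Finally, summing over $k=1,\dots,n$ and using ${\rm tr}(A^{-1})=\sum_{k=1}^n a^{kk}$ produces the stated trace identity. I do not anticipate any real obstacle: this is a textbook fact (Theorem A.4 of \cite{bai2010spectral}), and the only point that needs a word of care is that the nonsingularity hypotheses on $A$ and $A_k$ guarantee the Schur complement is nonzero; the block factorization displayed above makes both the inversion formula and the supporting determinant identity immediate.
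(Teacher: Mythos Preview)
Your proof is correct; the Schur complement/block-LDU argument you give is the standard route to this identity, and your care about nonsingularity of the Schur complement is appropriate. Note, however, that the paper itself does not prove this lemma at all: it is listed in the Appendix purely as a citation of Theorem~A.4 in \cite{bai2010spectral}, so there is no in-paper proof to compare against.
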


\begin{lemma}[Burkholder's inequality ]\label{lemma:8}
Let $\{ {X_k}\} $ be a complex martingale difference sequence with respect to the increasing $\sigma$-field. Then, for $p > 1,$ ${\rm E}{\left| {\sum {{X_k}} } \right|^p} \le {K_p}{\rm E}{\left({\sum {\left| {{X_k}} \right|} ^2}\right)^{p/2}}$.
\end{lemma}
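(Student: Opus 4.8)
The plan is to recognize this as Burkholder's classical square-function inequality for martingale differences, which in the material above is invoked only with the fixed exponent $p=4$ (in the bound for ${\rm E}|m_n(z)-{\rm E}m_n(z)|^4$ through the martingale differences $\gamma_k$). I would first reduce to a finite sum: write $f_N=\sum_{k=1}^N X_k$, $f^\ast=\max_{n\le N}|f_n|$ and $S=(\sum_{k=1}^N|X_k|^2)^{1/2}$. If some $X_k\notin L^p$ the right-hand side is infinite and nothing is to be shown, and the infinite-sum case follows by Fatou's lemma applied to partial sums.

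The one identity doing the real work comes from $|f_n|^2=|f_{n-1}|^2+2\Re(\overline{f_{n-1}}X_n)+|X_n|^2$ together with ${\rm E}_{n-1}X_n=0$: the process $T_n:=\sum_{k\le n}\Re(\overline{f_{k-1}}X_k)$ (with $f_0=0$) is then a real martingale with $T_0=0$, and $|f_n|^2=2T_n+\sum_{k\le n}|X_k|^2$ holds identically. Taking expectations gives the base case $p=2$ with constant $1$, namely ${\rm E}|f_N|^2={\rm E}S^2$. For $p=4$ one uses $|f_N|\le f^\ast$ and $(f^\ast)^2\le 2T^\ast+S^2$, where $T^\ast=\max_n|T_n|$, to get ${\rm E}(f^\ast)^4\le 8\,{\rm E}(T^\ast)^2+2\,{\rm E}S^4$; Doob's $L^2$ maximal inequality bounds ${\rm E}(T^\ast)^2$ by $4\,{\rm E}T_N^2$, and orthogonality of the increments of $T_N$ gives ${\rm E}T_N^2=\sum_k{\rm E}[\Re(\overline{f_{k-1}}X_k)]^2\le\sum_k{\rm E}(|f_{k-1}|^2|X_k|^2)\le{\rm E}((f^\ast)^2S^2)\le({\rm E}(f^\ast)^4)^{1/2}({\rm E}S^4)^{1/2}$ by Cauchy--Schwarz. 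Writing $a=({\rm E}(f^\ast)^4)^{1/2}$ and $b=({\rm E}S^4)^{1/2}$, these estimates collapse to a quadratic inequality $a^2\le C_1 ab+C_2 b^2$, which forces ${\rm E}|f_N|^4\le{\rm E}(f^\ast)^4\le K_4\,{\rm E}S^4$; iterating the same step covers every even integer $p=2m$, which is all that the applications in this paper require.

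For an arbitrary exponent $p>1$ this shortcut is unavailable and one passes through the good-$\lambda$ method: for a fixed $\beta>1$ and all $\delta,\lambda>0$ one proves ${\rm P}(f^\ast>\beta\lambda,\,S\le\delta\lambda)\le C(\beta)\delta^2\,{\rm P}(f^\ast>\lambda)$ by stopping $f$ at the first exceedance of level $\lambda$ and applying the $p=2$ estimate to the post-stopping part on that event, then integrates this inequality against $p\lambda^{p-1}\,d\lambda$ and chooses $\delta$ small enough to absorb the term $\beta^pC(\beta)\delta^2\,{\rm E}(f^\ast)^p$, obtaining $\|f^\ast\|_p\le K_p\|S\|_p$ and hence the lemma. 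I expect the good-$\lambda$ inequality to be the only genuinely delicate point, being the place where the full martingale structure beyond mere orthogonality enters; since it is entirely standard I would in practice simply quote Burkholder's original work or the corresponding lemma in \cite{bai2010spectral} rather than reproduce the stopping-time estimate in detail.
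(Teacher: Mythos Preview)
The paper does not prove this lemma at all: it appears in the Appendix as one of several auxiliary results that are simply quoted from the literature (here, Burkholder's classical inequality), with no argument given. Your proposal goes well beyond this, supplying a correct sketch of the standard proof---the $p=2$ base case via orthogonality, the bootstrap to $p=4$ (the only exponent the paper actually uses) via Doob's maximal inequality and Cauchy--Schwarz, and the good-$\lambda$ route for general $p>1$. Your closing remark that in practice one would just cite Burkholder or \cite{bai2010spectral} is exactly what the paper does.
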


\begin{lemma}[Rosenthal's inequality ]\label{lemma:5}
Let $ X_i $ are independent with zero means,  then we have, for some constant $C_k$:
$${\rm E}\left|\sum X_i\right|^{2k} \leq C_k\left(\sum{\rm E}\left|X_i\right|^{2k}+\left(\sum {\rm E}\left|X_i\right|^2\right)^k\right). $$
\end{lemma}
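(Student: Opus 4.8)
The plan is to deduce Rosenthal's inequality from Burkholder's inequality (Lemma~\ref{lemma:8}) together with an induction on $k$. Since $X_1,X_2,\dots$ are independent with zero means, their partial sums form a martingale relative to the $\sigma$-fields $\sigma(X_1,\dots,X_m)$, so $\{X_i\}$ is a martingale difference sequence and Lemma~\ref{lemma:8} with $p=2k$ gives
\[
{\rm E}\Big|\sum_i X_i\Big|^{2k}\le K_{2k}\,{\rm E}\Big(\sum_i|X_i|^2\Big)^{k}.
\]
Writing $Y_i=|X_i|^2\ge 0$ and $A=\sum_i{\rm E}Y_i=\sum_i{\rm E}|X_i|^2$, it therefore suffices to prove the auxiliary statement
\[
(\star_k):\qquad {\rm E}\Big(\sum_i Y_i\Big)^{k}\le C_k\Big(\sum_i{\rm E}Y_i^{k}+A^{k}\Big)
\]
for arbitrary independent nonnegative $Y_i$, because $\sum_i{\rm E}Y_i^{k}=\sum_i{\rm E}|X_i|^{2k}$.

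I would prove $(\star_k)$ by induction on $k$, the base case $k=1$ being the identity ${\rm E}\sum_i Y_i=A$. For the inductive step, peel off one factor,
\[
{\rm E}\Big(\sum_i Y_i\Big)^{k}=\sum_j{\rm E}\Big[\Big(\sum_i Y_i\Big)^{k-1}Y_j\Big],
\]
split $\sum_i Y_i=Y_j+\sum_{i\ne j}Y_i$ with $\sum_{i\ne j}Y_i$ independent of $Y_j$, and use the binomial theorem together with ${\rm E}\big(\sum_{i\ne j}Y_i\big)^{m}\le{\rm E}\big(\sum_i Y_i\big)^{m}$ to get
\[
{\rm E}\Big(\sum_i Y_i\Big)^{k}\le\sum_{l=0}^{k-1}\binom{k-1}{l}\Big(\sum_j{\rm E}Y_j^{\,l+1}\Big)\,{\rm E}\Big(\sum_i Y_i\Big)^{k-1-l}.
\]
The term $l=k-1$ contributes exactly $\sum_j{\rm E}Y_j^{\,k}$, one of the two allowed summands. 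For $0\le l\le k-2$ the induction hypothesis $(\star_{k-1-l})$ bounds ${\rm E}\big(\sum_i Y_i\big)^{k-1-l}$ by $C_{k-1-l}\big(\sum_i{\rm E}Y_i^{\,k-1-l}+A^{k-1-l}\big)$, so what remains is to dominate products of the shape $\big(\sum_j{\rm E}Y_j^{\,l+1}\big)\big(\sum_i{\rm E}Y_i^{\,k-1-l}\big)$ and $\big(\sum_j{\rm E}Y_j^{\,l+1}\big)A^{k-1-l}$ by $C\big(\sum_i{\rm E}Y_i^{\,k}+A^{k}\big)$.

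This last reduction is where the real work sits, and it is purely a matter of exponent bookkeeping. Using the log-convexity of $p\mapsto{\rm E}Y_i^{\,p}$ I would interpolate ${\rm E}Y_i^{\,l+1}$ and ${\rm E}Y_i^{\,k-1-l}$ between ${\rm E}Y_i$ and ${\rm E}Y_i^{\,k}$, apply H\"older's inequality to the sums over $i$ to replace $\sum_i({\rm E}Y_i)^{1-\theta}({\rm E}Y_i^{\,k})^{\theta}$ by $A^{1-\theta}\big(\sum_i{\rm E}Y_i^{\,k}\big)^{\theta}$, and then invoke the weighted arithmetic--geometric inequality $a^{s}b^{1-s}\le sa+(1-s)b$ to split the resulting products of powers of $A^{k}$ and $\sum_i{\rm E}Y_i^{\,k}$ into a sum of those two quantities. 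One checks that the interpolation weights are exactly such that the H\"older and AM--GM exponents come out conjugate — this is forced by the identity $(l+1)+(k-1-l)=k$ — so every step closes and collecting the finitely many constants produces a $C_k$ depending only on $k$, completing the induction and hence the lemma. The complex setting causes no extra difficulty, since Lemma~\ref{lemma:8} already covers complex martingale differences and everything afterwards concerns only the nonnegative variables $|X_i|^2$.
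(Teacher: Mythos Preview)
The paper does not prove this lemma; it is listed in the Appendix (Section~4) as a standard cited result, alongside Burkholder's inequality, Bernstein's inequality, and the like, with no argument given. So there is nothing in the paper to compare your proof against.

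Your argument itself is sound. Burkholder (Lemma~\ref{lemma:8}) with $p=2k$ reduces the problem to the moment bound $(\star_k)$ for the nonnegative independent variables $Y_i=|X_i|^2$, and your inductive scheme for $(\star_k)$ closes correctly. I checked the exponent bookkeeping you left to the reader: writing $B=\sum_i{\rm E}Y_i^{\,k}$, log-convexity plus H\"older give $\sum_j{\rm E}Y_j^{\,l+1}\le A^{1-\theta}B^{\theta}$ with $\theta=l/(k-1)$ and $\sum_i{\rm E}Y_i^{\,k-1-l}\le A^{1-\phi}B^{\phi}$ with $\phi=(k-2-l)/(k-1)$, so the cross product is bounded by $A^{k/(k-1)}B^{(k-2)/(k-1)}=(A^k)^{1/(k-1)}B^{(k-2)/(k-1)}$, which AM--GM splits into $A^k+B$; the mixed term $(\sum_j{\rm E}Y_j^{\,l+1})A^{k-1-l}$ works out analogously. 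Collecting constants over the finitely many $l$ completes the induction. So the proposal is correct; it simply supplies a proof where the paper chose to quote the result.
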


\begin{lemma}[Lemma 2.14 in \cite{bai2010spectral}]\label{lemma:12}
Let $f_1,f_2,\cdots$ be analytic in $D$, a connected open set of $\mathbb C$, satisfying $\left|f_n\left(z\right)\right|\le M$ for every $n$ and $z$ in $D$, and $f_n\left(z\right)$ converges as $n \to \infty$ for each $z$ in a subset of $D$ having a limit point in $D$. Then there exists a function $f$ analytic in $D$ for which $f_n\left(z\right) \to f\left(z\right)$ and $f_n^{\prime} \to f^{\prime}\left(z\right)$ for all $z\in D$. Moreover, on any set bounded by a contour interior to $D$, the convergence is uniform and $\left\{f_n^{\prime}\left(z\right)\right\}$ is uniformly bounded.
\end{lemma}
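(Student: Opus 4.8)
The plan is to prove this by the standard normal-families argument, i.e.\ the Vitali--Porter theorem. Since $|f_n(z)|\le M$ for all $n$ and all $z\in D$, the family $\{f_n\}$ is locally bounded, hence normal by Montel's theorem; equivalently, the Cauchy estimate $|f_n'(z)|\le M/\mathrm{dist}(z,\partial D)$ shows that $\{f_n\}$ is locally equicontinuous, so Arzel\`a--Ascoli applies. Consequently every subsequence of $\{f_n\}$ admits a further subsequence converging uniformly on compact subsets of $D$ to a limit that is analytic in $D$, by Weierstrass' theorem on locally uniform limits of analytic functions. Note that any set bounded by a contour interior to $D$ is contained in a compact subset of $D$, so throughout it suffices to work with compacta.

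Next I would identify the limit. Let $E\subset D$ be the set on which $f_n$ converges pointwise; by hypothesis $E$ has a limit point in $D$. If $g$ and $h$ are limits of two subsequences as above, then $g=h$ on $E$, so $g-h$ is analytic on the connected open set $D$ and vanishes on a set having an accumulation point in $D$; the identity theorem forces $g\equiv h$ on $D$. Write $f$ for this common value. If $f_n$ did not converge to $f$ uniformly on compact subsets, there would be a compact $K\subset D$, an $\varepsilon>0$, and a subsequence with $\sup_K|f_{n_j}-f|\ge\varepsilon$; applying the previous paragraph to $\{f_{n_j}\}$ yields a sub-subsequence converging uniformly on compact subsets to $f$, a contradiction. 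Hence $f_n\to f$ uniformly on every compact subset of $D$, in particular on any set bounded by a contour interior to $D$.

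For the derivatives, fix a compact $K\subset D$, pick $r>0$ such that the closed $r$-neighbourhood $K'$ of $K$ is still contained in $D$ (so $K'$ is compact), and apply Cauchy's integral formula for the derivative: for $z\in K$,
\[
f_n'(z)-f'(z)=\frac{1}{2\pi i}\oint_{|w-z|=r}\frac{f_n(w)-f(w)}{(w-z)^2}\,dw ,
\]
whence $\sup_K|f_n'-f'|\le r^{-1}\sup_{K'}|f_n-f|\to 0$ by the preceding step, and likewise $|f_n'(z)|\le M/r$ for $z\in K$, giving the asserted uniform boundedness of $\{f_n'\}$. I do not anticipate any genuine obstacle: the content is entirely classical complex analysis, and the only points that deserve a little care are the subsequence-of-subsequences step that upgrades ``every subsequence has a sub-subsequence converging to $f$'' into convergence of the full sequence, and the observation that the hypothesis ``the convergence set has a limit point \emph{inside} $D$'' is precisely what makes the identity theorem applicable.
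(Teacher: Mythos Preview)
Your argument is correct and is precisely the classical Vitali--Porter proof via Montel's theorem and the identity theorem; the Cauchy-estimate step for the derivatives is also fine. Note, however, that the paper does not actually prove this lemma: it is listed in the Appendix as a quotation of Lemma~2.14 from \cite{bai2010spectral} and is used as a black box, so there is no ``paper's own proof'' to compare against. Your write-up would serve as a self-contained justification should one be desired.
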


\begin{lemma}[Theorem B.9 in \cite{bai2010spectral}]\label{lemma:13}
Assume that $\left\{G_n\right\}$ is a sequence of functions of bounded variation and $G_n\left(-\infty\right)=0$ for all $n$. Then,
$$\lim_{n\to \infty}m_{G_n}\left(z\right)=m\left(z\right)\ \forall z\in D$$
if and only if there is a function of bounded variation $G$ with $G\left(-\infty\right)=0$  and Stieltjes transform $m\left(z\right)$ and such that $G_n\to G$ vaguely.
\end{lemma}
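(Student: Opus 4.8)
The plan is to prove the two implications separately, the engine being Helly's selection theorem together with the Stieltjes inversion formula. Throughout I use the standing hypothesis --- implicit in the cited statement and automatic in the application where $G_n=F^{\mathbf S_n}-F$ has total variation at most $2$ --- that $\sup_n\int|dG_n|=:V<\infty$, and I take $D$ to be a subregion of $\mathbb C\setminus\mathbb R$ with a limit point, on which all Stieltjes transforms are analytic.

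For the \emph{sufficiency} direction (vague convergence implies convergence of the transforms) I would fix $z=u+iv$ with $v\neq0$ and split the integral at level $M>|u|$: on $\{|x|>M\}$ the bound $|x-z|\ge|x|-|u|\ge M-|u|$ gives $\bigl|\int_{|x|>M}(x-z)^{-1}\,dG_n(x)\bigr|\le V/(M-|u|)$ uniformly in $n$ (and the same for $G$), while on $[-M,M]$ the integrand $x\mapsto(x-z)^{-1}$ is bounded and continuous, hence a uniform limit of $C_c(\mathbb R)$ functions, so vague convergence passes to the limit there. Letting $n\to\infty$ and then $M\to\infty$ would give $m_{G_n}(z)\to m_G(z)=m(z)$ on $D$.

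For the \emph{necessity} direction I would argue by subsequences. Given any subsequence of $\{G_n\}$, Helly's theorem (using $V$) extracts a further subsequence $G_{n_j}$ converging vaguely to a bounded-variation function $G$ normalized by $G(-\infty)=0$; the sufficiency step then forces $m_G=m$ on $D$, and by analytic continuation on the full common domain of analyticity. Next I would invoke the Stieltjes inversion formula
\[
G(b)-G(a)=\lim_{\varepsilon\downarrow0}\frac1\pi\int_a^b\Im m_G(x+i\varepsilon)\,dx
\]
at continuity points $a<b$ to recover $G$ uniquely from $m$; since the limit $G$ is thereby the same for every vaguely convergent subsequence, a standard subsequence argument upgrades this to $G_n\to G$ vaguely, and this $G$ has bounded variation, satisfies $G(-\infty)=0$, and has Stieltjes transform $m$, as claimed.

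The main obstacle is not depth but two bookkeeping subtleties: in the sufficiency step one must control the tails carefully so that \emph{vague} convergence (which may lose mass at infinity) is enough --- precisely where the uniform total-variation bound is used --- and in the necessity step one must have the inversion formula in its signed-measure form, since here the $G_n$ need not be monotone.
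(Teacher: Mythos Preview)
The paper does not supply its own proof of this lemma: it is quoted verbatim as Theorem~B.9 of Bai--Silverstein and placed in the appendix as an auxiliary result, with no argument attached. So there is no in-paper proof to compare against.

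That said, your outline is the standard proof one finds in the cited reference, and it is essentially correct. A couple of minor refinements would make it airtight. In the sufficiency step, the truncated integrand $(x-z)^{-1}\mathbf 1_{[-M,M]}(x)$ is not in $C_c(\mathbb R)$ and is not a uniform limit of $C_c$ functions on all of $\mathbb R$ (the jump at $\pm M$ obstructs that); cleaner is to multiply by a smooth cutoff $\varphi_M\in C_c(\mathbb R)$ with $\varphi_M\equiv 1$ on $[-M,M]$ and $0\le\varphi_M\le 1$, so that $\varphi_M(x)(x-z)^{-1}\in C_c$ feeds directly into vague convergence while the remainder $\int(1-\varphi_M)(x-z)^{-1}\,dG_n$ is controlled by $V/(M-|u|)$ exactly as you wrote. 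In the necessity step you are right to flag the two bookkeeping points: Helly's theorem requires the uniform total-variation bound $V<\infty$ (implicit in the statement, and trivially satisfied in the paper's application where $G_n$ is a distribution function), and the inversion formula must be invoked in its signed-measure form. With those adjustments your argument is complete.
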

%\bibliographystyle{abbrv}
%\bibliography{article}

\end{document}